\newtheorem{thm}{Theorem}
\newtheorem{lem}[thm]{Lemma}
\newtheorem{prop}[thm]{Proposition}
\newtheorem{defi}[thm]{Definition}
\newcommand{\RR}{\mathbb{R}}      
\newcommand{\ZZ}{\mathbb{Z}}      
\newcommand{\NN}{\mathbb{N}}      
\newcommand{\CC}{\mathbb{C}}       
\newcommand{\PSL}{\mathrm{PSL}}
\newcommand{\R}{\mathrm{Re}\,}
\newcommand{\I}{\mathrm{Im}\,}
\newcommand{\HH}{\mathbb{H}}      
\newcommand{\tr}{\mathrm{tr}}   
\newcommand{\SL}{\mathrm{SL}}  
\begin{document}

\date{}
\title{Hyperbolic Surfaces with Arbitrarily Small Spectral Gap}
\author{Louis Soares}
\maketitle

\begin{abstract}
Let $ X = \Gamma\setminus \HH $ be a non-elementary geometrically finite hyperbolic surface and let $ \delta $ denote the Hausdorff dimension of the limit set $ \Lambda(\Gamma) $. We prove that for every $ \varepsilon > 0 $ the surface $ X $ admits a finite cover $ X' $ such that the Selberg zeta function associated to $ X' $ has a zero $ s\neq \delta $ with $ \vert \delta - s\vert < \varepsilon  $.

For $ \delta > \frac{1}{2} $ we exploit the combinatorial interpretation of spectral gap in terms of expander graphs.
For $ \delta \leq \frac{1}{2} $ the proof is based on the thermodynamic formalism approach for L-functions associated to hyperbolic surfaces and an analogue of the Artin-Takagi formula for these L-functions.
\end{abstract}

\section{Introduction and Results}

The spectral theory of compact and finite-area hyperbolic surfaces has a long history going back to Selberg \cite{Selberg}, Buser \cite{Buser} and Venkov \cite{Venkov} and is by now a classical subject. A spectral theory of infinite-area hyperbolic surfaces has appeared only recently, but the character of this theory is changed by the assumption of infinite area. Indeed, for infinite-area surfaces $ X $ the $ L^{2} $-spectrum of the $ \Delta_{X} $ is mostly continuous. (Here $ \Delta_{X} $ denotes the positive Laplacian on $ X $.) However there is a natural replacement for the missing spectrum: the \textit{resonances} of $ X $. These arise as poles of a meromorphic continuation of the resolvent of $ \Delta_{X} $ and the spectral theory of infinite-area hyperbolic surfaces is based on the existence of such a meromorphic continuation. A good reference on this subject is the book of Borthwick \cite{Borthwick}.

Let $ \mathbb{H} $ denote the Poincaré upper half-plane
$$ \mathbb{H} = \{ x+iy \in \mathbb{C} :  x\in \mathbb{R}, y>0\} $$
endowed with the hyperbolic metric
$$ ds^{2} = \frac{dx^{2}+dy^{2}}{y^{2}}. $$
The group of isometries of $ \mathbb{H} $ is isomorphic to $ \mathrm{PSL}_{2}(\mathbb{R}) = \mathrm{SL}_{2}(\RR)/\{ \pm I\} $, which acts on the hyperbolic plane by Möbius transformations
$$ z\mapsto \frac{az +b}{cz+ d}, \quad \begin{pmatrix}
a & b \\
c & d 
\end{pmatrix} \in \mathrm{SL}_{2}(\mathbb{R}). $$
Any hyperbolic surface $ X $ is isometric to a quotient $ X=\Gamma\setminus \HH $ by a Fuchsian group $ \Gamma $, that is, a discrete subgroup of $ \PSL_{2}(\RR). $ We will always assume that $ \Gamma $ is a non-elementary, finitely generated Fuchsian group without elliptic elements. Let $ \delta $ be the Hausdorff dimension of the limit set $ \Lambda(\Gamma) $ of $ \Gamma $. The spectrum of $ \Delta_{X} $ on $ L^{2}(X) $ was described by Lax-Phillipps \cite{LaxPhil}, \cite{LaxPhillips} and Patterson \cite{Patt}:

\begin{enumerate}
\item[(i)] The continuous spectrum is the interval $ [\frac{1}{4}, \infty), $
\item[(ii)] If $ X $ has infinite area, there are no embedded eigenvalues inside $ [\frac{1}{4}, \infty), $
\item[(iii)] The pure point spectrum (which we denote by $ \Omega(X) $) is empty if $ \delta \leq \frac{1}{2} $. If $ \delta > \frac{1}{2} $, the number $ \# \Omega(X) \cap [0, \frac{1}{4}) $ of eigenvalues in $ [0, \frac{1}{4}) $ is finite and the bottom of the spectrum is given by
$$ \lambda_{0}(\Gamma) := \inf \Omega(X) = \delta(1-\delta). $$
\end{enumerate}
The resolvent 
$$ R_{X}(s) := (\Delta_{X}-s(1-s))^{-1} : L^{2}(X) \to L^{2}(X) $$
defines a bounded operator for $ \R s > \frac{1}{2} $, except for a finite number of poles $ s\in (\frac{1}{2}, 1] $ corresponding to the $ L^{2} $-eigenvalues $ \lambda = s(1-s)\in [0, \frac{1}{4}) $. Mazzeo and Melrose \cite{MazzeoMelrose} proved that $ R_{X}(s) $
has a meromorphic continuation to the entire complex plane, as an operator from $ C_{c}^{\infty}(X) $ to $ C^{\infty}(X) $. By the result of Patterson \cite{Patt} and Sullivan \cite{Sull}, $ R_{X}(s) $ is analytic for $ \R s > \delta $. Patterson \cite{Patt2} proved that $ R_{X}(s) $ has a simple pole at $ s=\delta $ and no other poles on the line $ \R s = \delta. $ Let $ \mathcal{R}_{X} $ denote the set of poles of the resolvent $ R_{X}(s) $. We define the \textit{spectral gap} of $ X $ as the quantity
$$ \mathrm{gap}(X):= \inf_{\zeta\in \mathcal{R}_{X}-\{ \delta \}} ( \delta-\mathrm{Re}\, \zeta). $$

For $ \delta > \frac{1}{2} $ the spectral gap is positive, since in this case all the resonances $ s\in \mathcal{R}_{X} $ with $ \R s > \frac{1}{2} $ correspond to the eigenvalues $ \lambda = s(1-s)\in [0, \frac{1}{4}) $ and there are only finitely many of them. Now suppose $ \delta \leq \frac{1}{2} $. In this case there is no discrete $ L^{2} $-spectrum at all and the existence of a non-zero spectral gap is less obvious. Nevertheless, Naud \cite{Naud} showed that $ \mathrm{gap}(X) > 0 $ for all non-elementary, geometrically finite hyperbolic surfaces $ X = \Gamma\setminus \HH $. However the size of the gap in Naud's proof is hardly explicit. The aim of this paper is to show that
$$ \mathrm{gap}'(X) := \inf_{\zeta\in \mathcal{R}_{X}-\{ \delta \}} \vert\delta-  \zeta\vert \geq  \mathrm{gap}(X) $$ 
can be made arbitrarily small by passing to finite covers. Our first theorem is the following:

\begin{thm}\label{First Main Theorem}
Let $ \Gamma $ be a non-elementary finitely generated Fuchsian group without elliptic elements. Then there exists a sequence $ (\Gamma_{N})_{N\in \NN} $ of finite-index normal subgroups of $ \Gamma $ satisfying
$$ \underset{N\to \infty}{\lim} \mathrm{gap}'(X_{N})=0, $$
where $ X_{N} = \Gamma_{N}\setminus \HH $ is the associated cover of $ X = \Gamma\setminus \HH. $
\end{thm}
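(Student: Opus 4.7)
The plan is to take $\Gamma_N$ to be the kernel of the composition $\Gamma \twoheadrightarrow \Gamma^{\mathrm{ab}} \twoheadrightarrow \ZZ/N\ZZ$. Any non-elementary finitely generated Fuchsian group without elliptic elements is either a finitely generated free group (when $X$ has cusps or funnels) or a closed orientable surface group, so $\Gamma^{\mathrm{ab}}$ has positive rank and this construction is available for every $N$. The resulting covers $X_N \to X$ are regular with cyclic Galois group $G_N \cong \ZZ/N\ZZ$. The proof then splits according to the dichotomy already visible in the excerpt, depending on whether $\lambda_0(X) = \delta(1-\delta)$ is an $L^2$-eigenvalue ($\delta > 1/2$) or not ($\delta \leq 1/2$). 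The common idea in both cases is that finite cyclic groups of large order are ``close to amenable'' and therefore force resonances of $X_N$ to migrate arbitrarily close to $\delta$.

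\textbf{Case $\delta > 1/2$.} Since $\lambda_0(X) \in [0, 1/4)$ is an honest $L^2$-eigenvalue, resonances of $X_N$ with $\R s > 1/2$ are in bijection with $L^2$-eigenvalues of $\Delta_{X_N}$ in $[0, 1/4)$, and it suffices to produce $L^2$-eigenvalues of $\Delta_{X_N}$ accumulating at $\lambda_0(X)$. Decomposing $L^2(X_N)$ according to characters $\chi \in \widehat{G_N}$ reduces the problem to finding $\chi$-equivariant eigenfunctions on $\HH$ with eigenvalues tending to $\lambda_0(X)$. The Brooks--Burger philosophy identifies the relevant spectral gap with that of the combinatorial Laplacian on the Cayley graph $\mathrm{Cay}(G_N, S)$, where $S$ is the image of a symmetric generating set of $\Gamma$. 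Since $\{\mathrm{Cay}(\ZZ/N, S)\}_N$ is not an expander family, these combinatorial gaps tend to $0$. Explicitly, I would take $\chi_k(m) = e^{2\pi i k m/N}$ with $k/N \to 0$ and test functions built from the Patterson--Sullivan ground state $\phi_0$ of $X$ twisted by a $\chi_k$-equivariant phase; the Rayleigh quotient produces eigenvalues of $\Delta_{X_N}$ converging to $\lambda_0(X)$ from above.

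\textbf{Case $\delta \leq 1/2$.} Here there is no $L^2$-spectrum and I would work directly with the Selberg zeta function, via an Artin--Takagi factorization
$$ Z_{X_N}(s) = \prod_{\chi \in \widehat{G_N}} L(s, \chi), $$
in which $L(s, \mathbbm{1}) = Z_X(s)$ while each non-trivial $\chi$ contributes additional zeros of $Z_{X_N}$. To access $L(s,\chi)$ I would set up a Bowen--Series symbolic coding of the geodesic flow together with a twisted Ruelle transfer operator $\mathcal{L}_{s, \chi}$ acting on a space of Hölder (or holomorphic) functions, so that the non-trivial zeros of $L(s,\chi)$ correspond to values of $s$ at which $1$ is an eigenvalue of $\mathcal{L}_{s, \chi}$. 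For $\chi = \mathbbm{1}$ this eigenvalue is attained, simply and isolatedly, at $s = \delta$ by Bowen's formula. Since $\chi_k(m) = e^{2\pi i km/N}$ with $k/N \to 0$ converges uniformly to $\mathbbm{1}$, the operators $\mathcal{L}_{s, \chi_k}$ converge to $\mathcal{L}_{s, \mathbbm{1}}$ in operator norm, uniformly on compact subsets of $s$. Analytic perturbation theory for the isolated simple eigenvalue then yields zeros $s_k$ of $L(s, \chi_k)$ with $s_k \to \delta$ in $\CC$; being indexed by non-trivial characters, these are automatically distinct from $\delta$ itself.

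\textbf{Main obstacle.} The $\delta \leq 1/2$ case carries the technical weight. The chief difficulties are: (i) establishing the Artin--Takagi factorization of $Z_{X_N}$ in the geometrically finite setting, with a proper treatment of parabolic elements if cusps are present; (ii) constructing a symbolic model for which $(s, \chi) \mapsto \mathcal{L}_{s, \chi}$ is an analytic family of nuclear (or at least trace-class) operators, so that the correspondence between zeros of $L(s,\chi)$ and the value $1$ in the spectrum of $\mathcal{L}_{s,\chi}$ is rigorous and stable under perturbation of $\chi$; and (iii) upgrading qualitative continuity of the leading eigenvalue in $\chi$ to a quantitative bound confining $s_k$ to a small \emph{disc} about $\delta$, not merely to a strip $\R s \approx \delta$. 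This last point is essential because the statement concerns $\mathrm{gap}'$ (the complex modulus) rather than $\mathrm{gap}$ (the real-part distance), and is precisely where the softness of the cyclic quotients must be converted into complex-analytic information.
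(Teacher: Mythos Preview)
Your proposal is correct and follows the same two-case strategy as the paper: the expander/Cayley-graph contradiction for $\delta > 1/2$, and Artin--Takagi factorization of $Z_{\Gamma_N}$ into twisted $L$-functions plus perturbation of the transfer operator for $\delta \le 1/2$. The paper takes the full abelian quotient $\Gamma/\Gamma_N \cong (\ZZ/N\ZZ)^r$ rather than your cyclic one, but either works.

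The one simplification you are missing concerns your obstacles in the $\delta \le 1/2$ case. By a result of Patterson, a non-elementary geometrically finite hyperbolic surface with $\delta \le 1/2$ has no cusps, hence is convex co-compact, and by Button's theorem is then a Schottky surface. This eliminates obstacle (i) outright---there are no parabolic elements to handle---and makes obstacle (ii) standard: for a Schottky group the twisted transfer operator $\mathcal{L}_{s,\chi}$ acts on the Bergman space of holomorphic functions on the union of Schottky disks, is trace class, and $(s,\chi)\mapsto\mathcal{L}_{s,\chi}$ is analytic. Obstacle (iii) then dissolves in one line: $L_\Gamma(s,\theta)=\det(1-\mathcal{L}_{s,\theta})$ is $C^1$ in $(s,\theta)$ with $\partial_s L_\Gamma(\delta,0)=Z_\Gamma'(\delta)\neq 0$, so the implicit function theorem gives a continuous branch $\theta\mapsto s(\theta)$ of zeros with $s(0)=\delta$, which already confines $s(\theta)$ to an arbitrarily small disc about $\delta$ for $\theta$ small. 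No separate ``strip-to-disc'' upgrade is needed.
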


Now suppose in addition that $ \Gamma $ does not contain non-trivial parabolic elements or equivalently, that the quotient $ X= \Gamma\setminus \HH $ has no \textit{cusps}. Then by the work of Button \cite{Button}, $ \Gamma $ is a \textit{Schottky} group. Thus we call $ X $ a Schottky surface and in this case the following quantitative version of Theorem \ref{First Main Theorem} can be given:

\begin{thm}\label{main theorem}
Let $ X = \Gamma\setminus \HH $ be a Schottky surface. Then there exists a sequence $ (\Gamma_{N})_{N\in \NN} $ of finite-index normal subgroups of $ \Gamma $ satisfying the following property: for every $ \varepsilon > 0 $ there exists a constant $ c=c(\varepsilon, X) > 0 $ such that for every $ N $ we have 
$$ \# \left\{ \zeta\in \mathcal{R}_{X_{N}} : \vert \delta-\zeta \vert < \varepsilon \right\} \geq c [\Gamma:\Gamma_{N}], $$
where the resonances are counted with multiplicity.
\end{thm}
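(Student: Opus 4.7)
The plan is to pass to a tower of abelian covers of $X$ and factorise the Selberg zeta function of each cover as a product of twisted L-functions, then perturb around the trivial twist. Since $\Gamma$ is a Schottky group, it is free of some rank $r \geq 2$; let $\Gamma_N$ denote the normal subgroup of $\Gamma$ obtained as the preimage of $(N\ZZ)^r$ under the abelianisation $\Gamma \twoheadrightarrow \Gamma^{ab} \cong \ZZ^r$. Then $G_N := \Gamma/\Gamma_N \cong (\ZZ/N\ZZ)^r$ and $[\Gamma:\Gamma_N] = N^r$.

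For each unitary character $\chi \in \widehat{G_N}$, I would introduce a twisted L-function $L(s,\chi)$ via the Bowen--Series symbolic coding of $\Gamma$ and an associated twisted transfer operator $\mathcal{L}_{s,\chi}$ acting on a space of holomorphic functions on a neighbourhood of the limit set; following the thermodynamic formalism developed by Fried, Pollicott and Naud, $\mathcal{L}_{s,\chi}$ is nuclear and one has $L(s,\chi) = \det(1 - \mathcal{L}_{s,\chi})$, with $L(s,\mathbf{1}) = Z_{X}(s)$. The Artin--Takagi factorisation
$$ Z_{X_{N}}(s) \;=\; \prod_{\chi \in \widehat{G_N}} L(s,\chi) $$
then follows from the multiplicativity of dynamical determinants under inducing the regular representation of $G_N$, i.e.\ by decomposing the transfer operator of the cover into isotypic components.

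The key analytic input is a perturbation argument at $(s,\chi) = (\delta,\mathbf{1})$. By Patterson--Sullivan theory, $\mathcal{L}_{\delta}$ has a simple leading eigenvalue equal to $1$, isolated from the rest of the spectrum. Since $\mathcal{L}_{s,\chi}$ depends analytically on $(s,\chi)$ and the trivial character is an interior point of $\widehat{G_N}$ viewed inside the unit polydisc, Kato's analytic perturbation theorem produces a simple leading eigenvalue $\lambda(s,\chi)$ on a fixed neighbourhood of $(\delta,\mathbf{1})$. The implicit function theorem, combined with the non-vanishing of $\partial_s \lambda(\delta,\mathbf{1})$ (which is Bowen's formula for the topological pressure), yields an analytic function $\chi \mapsto s_\chi$ with $\lambda(s_\chi,\chi) = 1$, $s_{\mathbf{1}} = \delta$, and a Lipschitz estimate
$$ |s_\chi - \delta| \;\leq\; C\, d(\chi,\mathbf{1}),\qquad d(\chi,\mathbf{1}) := \max_{1 \leq i \leq r} |1 - \chi(\gamma_i)|, $$
where $\gamma_1,\ldots,\gamma_r$ are free generators of $\Gamma$ and $C = C(X) > 0$ is independent of $N$.

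Given $\varepsilon > 0$, an elementary count on $(\ZZ/N\ZZ)^r$ shows that at least $c(\varepsilon) N^r = c(\varepsilon) [\Gamma:\Gamma_N]$ characters $\chi$ satisfy $d(\chi,\mathbf{1}) < \varepsilon/C$, each contributing a simple zero $s_\chi$ of $L(s,\chi)$ inside the disc of radius $\varepsilon$ around $\delta$; by the Artin--Takagi product these zeros add up in the multiplicity of zeros of $Z_{X_{N}}$, and hence in the resonance count of $X_N$. The hardest step will be establishing the perturbation estimate with a constant $C$ that is uniform in $N$ and in $\chi$: one must show that both the spectral gap of $\mathcal{L}_{\delta}$ and the operator-norm bound $\|\mathcal{L}_{s,\chi} - \mathcal{L}_{s}\| = O(d(\chi,\mathbf{1}))$ are controlled purely in terms of $X$. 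A secondary technical point is to verify the Artin--Takagi identity with the correct multiplicities, which reduces to compatibility of the transfer-operator construction of resonances with the isotypic decomposition of the lifted dynamics on $X_N$.
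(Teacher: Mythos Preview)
Your proposal is correct and follows essentially the same route as the paper: the same abelian tower $\Gamma_N$, the Artin--Takagi factorisation of $Z_{\Gamma_N}$ into twisted L-functions, the Fredholm determinant identity $L_\Gamma(s,\chi)=\det(1-\mathcal{L}_{s,\chi})$, a perturbation argument at $(s,\chi)=(\delta,\mathbf{1})$, and a lattice-point count of characters close to $\mathbf{1}$.

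Two remarks are worth making. First, the paper bypasses Kato perturbation of the leading eigenvalue entirely: since $(s,\theta)\mapsto \det(1-\mathcal{L}_{s,\theta})$ is itself $C^1$ and $\partial_s L_\Gamma(\delta,0)=Z_\Gamma'(\delta)\neq 0$ (simplicity of the zero at $\delta$), one applies the implicit function theorem directly to the determinant to obtain a continuous $\theta\mapsto s(\theta)$ with $L_\Gamma(s(\theta),\theta)=0$. This is shorter than tracking $\lambda(s,\chi)$ and invoking Bowen's formula for $\partial_s\lambda$. Second, your stated ``hardest step'' --- uniformity of the constant $C$ in $N$ --- is a non-issue: the twisted operators $\mathcal{L}_{s,\chi_\theta}$ and the L-functions $L_\Gamma(s,\theta)$ live on the fixed surface $X$ and are parametrised by $\theta\in\RR^r/\ZZ^r$, with $N$ entering only through the restriction $\theta\in\frac{1}{N}\ZZ^r$. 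The neighbourhood of $\mathbf{1}$ and the implicit-function constant are therefore determined once and for all by $X$, independently of $N$.
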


Note that Theorem \ref{First Main Theorem}
follows from Theorem \ref{main theorem} when $ X $ is a Schottky surface.

In Section \ref{construction} we give an explicit construction of the subgroups $ \Gamma_{N} $. It should be pointed out that the existence of finite coverings of compact hyperbolic surfaces giving arbitrarily small first eigenvalue was already known. We refer the reader to the book of Bergeron \cite{Bergeron}, Chapter 3 for the proof.

Let us recall the definition of the Selberg zeta function (for more background see \cite[Chapter~2]{Borthwick}). There is a one-to-one correspondence between the closed, oriented, primitive geodesics of a hyperbolic surface $ X = \Gamma\setminus\HH $ and the set $ \overline{\Gamma}_{p} $ of conjugacy classes of primitive hyperbolic elements of $ \Gamma $. The length of the closed geodesic corresponding to the conjugacy class $ \overline{\gamma} $ is given by the displacement length $ \ell(\gamma) $ of the element $ \gamma $.  The Selberg zeta function $ Z_{\Gamma} $ is defined by the Euler product
\begin{equation}\label{prodzeta}
Z_{\Gamma}(s) := \prod_{\overline{\gamma}\in \overline{\Gamma}_{p}} \prod_{k=0}^{\infty} \left( 1-e^{-(s+k)\ell(\gamma)}\right),
\end{equation}
The product (\ref{prodzeta}) converges absolutely for $ \R s > \delta $.

Studying the resonances of $ X = \Gamma\setminus\HH $ often means studying $ Z_{\Gamma} $. The Selberg zeta function admits a meromorphic continuation to $ s\in \CC $ when $ \Gamma $ is non-elementary and finitely generated, see Selberg \cite{Selbergzeta} and Guillopé \cite{Guil}. We will continue to write $ Z_{\Gamma} $ for the meromorphic continuation of the Selberg zeta function. For non-elementary and geometrically finite hyperbolic surfaces $ X= \Gamma\setminus \HH $ it follows from the results of Selberg \cite{Selbergzeta} (for the finite-area case) and Borthwick-Judge-Perry \cite{BJP} (for the infinite-area case) that the zero set of $ Z_{\Gamma} $ is split into ``topological zeros'' at the non-positive integers $ -\NN_{0} $, and the set $ \mathcal{R}_{X} $ of resonances, counted with multiplicities. (The multiplicity of a resonance $ \zeta $ is given by the rank of the residue operator of $ R_{X}(s) $ at the pole $ s=\zeta $.) In particular, $ Z_{\Gamma} $ has no zeros on the line $ \R s = \delta $ except for a simple zero at $ s= \delta $, and all the other zeros lie in the half-plane $ \R s < \delta. $

In view of these results, Theorem 1 can be phrased in terms of the Selberg zeta function as follows: for every $ \varepsilon > 0 $ and for every $ \Gamma $ satisfying the assumptions of Theorem 1, we can pass to a suitable finite-index normal subgroup $ \Gamma'$ of $ \Gamma $ such that $ Z_{\Gamma'} $ has a zero $ s\neq \delta $ with  $  \vert\delta -s \vert < \varepsilon $.

\paragraph{Acknowledgements} I would like to thank my PhD advisor Anke Pohl for many helpful discussions.

\section{Explicit Construction and Proof Strategy}\label{construction}

Recall that $ \Gamma $ is a non-elementary, finitely generated Fuchsian group without elliptic elements. Then $ X = \Gamma\setminus \HH $ is a two-dimensional smooth Riemannian manifold with constant negative sectional curvature $ -1 $ and geometrically finite. Moreover the Euler characteristic of $ X $ is finite. We refer the reader to \cite[Chapter~2]{Borthwick} for more background material on hyperbolic surfaces.

The fundamental group $ \pi_{1}(X) $ of $ X= \Gamma\setminus \HH $ is isomorphic to $ \Gamma $. Thus the homology group $ H_{1}(X, \ZZ)$ is given by the abelianization $\Gamma/[\Gamma, \Gamma] $ of $ \Gamma $, and it is infinite. We deduce that the homology group is isomorphic to $ \ZZ^{r}\oplus \mathrm{Tor} $, where $ r $ is a positive integer and $ \mathrm{Tor} $ is the (finite) torsion subgroup. (This follows from the structure theorem for finitely generated abelian groups.) Notice that $ r $ is equal to the first Betti number of $ X $.

Choose an isomorphism $ \varphi : H_{1}(X, \ZZ)\to \ZZ^{r}\oplus \mathrm{Tor} $ and let $ T $ be the pre-image of the torsion group $ \mathrm{Tor} $ under $ \varphi $. Clearly, $ T $ is a finite normal subgroup of $ H_{1}(X, \ZZ). $ In this paper we will consider the group $ H:= H_{1}(X, \ZZ)/T $, which is isomorphic to $ \ZZ^{r} $. The group $ H $ may be viewed as the torsion-free part of the homology group.

Let $ [\gamma] $ denote the image of the element $ \gamma\in \Gamma $ under the natural projection $ \Gamma\to H $. We shall assume that an isomorphism $ H\cong \ZZ^{r} $ has been fixed and write $ \ZZ^{r} $ instead of $ H $. We may therefore regard $ [\gamma] $ as a vector in $ \ZZ^{r} $. For $ N\in \mathbb{N} $ we define the subgroups $ \Gamma_{N} $ by 
\begin{equation}\label{Arbi}
\Gamma_{N} := \{ \gamma\in \Gamma : [\gamma] \equiv 0 \mod N \},
\end{equation}
where $ v=(v_{1}, \cdots, v_{r}) \equiv 0 \mod N $ simply means $ v_{j}\equiv 0\mod N $ for all $ j=1,\dots, r. $

As a concrete example, consider the fundamental group $ \Gamma = \pi_{1}(X) $ of a three-funnel surface $ X $. In this case $ \Gamma $ is a free group generated by two isometries $ S_{1}, S_{2} \in \PSL_{2}(\RR) $ and its inverses. By the Nielsen-Schreier theorem \cite{Stillwell}, $ \Gamma_{N} $ is a free, being a subgroup of a free group. For $ N=2 $ one can verify that $ \Gamma_{2} $ is generated by the following elements (and its inverses):
$$ S_{1}^{2},\, S_{2}^{2},\, S_{1}S_{2}^{2}S_{1},\, S_{2}S_{1}^{2}S_{2},\, S_{1}S_{2}S_{1}S_{2}. $$

Let us now recall the ends geometry of non-elementary, geometrically finite hyperbolic surfaces: $ X $ can be decomposed into a convex surface $ \mathcal{N} $ with geodesic boundary, called the \textit{convex core} of $ X $, on which infinite area ends $ F_{i} $ are glued, called \textit{funnels}. A funnel $ F_{i} $ is a half-cylinder isometric to
$$ \left( \RR/\ell_{i}\ZZ\right)_{\theta}\times (\RR^{+})_{t}, $$
endowed with the metric 
$$ ds^{2} = dt^{2} + \cosh^{2}(t) d\theta^{2}. $$
Here $ \ell_{i} $ is the length of the closed boundary geodesic dividing the funnel $ F_{i} $ and Nielsen region $ \mathcal{N} $. The convex core can further be decomposed into a compact set $ \mathcal{K} $, called the \textit{compact core} of $ X $, on which finite-area ends $ C_{j} $ are glued, called \textit{cusps}. Any  cusp $ C_{j} $ is isometric to the quotient
$$ C = \Gamma_{\infty}\setminus \{ z\in \HH : 0\leq \R z \leq 1, \; \I z \geq 1 \}, \quad \Gamma_{\infty} = \langle z\mapsto z+1\rangle, $$ 
and has hyperbolic area equal to $ 1 $. One way to measure how ``big'' $ X $ is, is provided by the hyperbolic area of the convex core $ \mathcal{N} $. For finite-area hyperbolic surfaces, $ \mathrm{area}(\mathcal{N}) $ is equal to the area of $ X $, since there are no funnels in this case. By \cite[Lemma~10.3]{Borthwick} we have 
\begin{equation}\label{char}
\mathrm{area}(\mathcal{N}) = -2\pi \Xi(X),
\end{equation}
where $ \Xi(X) $ is the Euler characteristic of $ X $. 

Returning to the subgroups $ \Gamma_{N} $ defined by (\ref{Arbi}), consider the homomorphism 
$$ \varphi_{N} : \Gamma \to \left(\ZZ/N\ZZ\right)^{r}, \; \gamma \mapsto [\gamma] \mod N. $$ This homomorphism is surjective and thus $ \Gamma_{N} = \ker \varphi_{N} $. Moreover the quotient $ \Gamma/\Gamma_{N} $ is isomorphic to $ \left(\ZZ/N\ZZ\right)^{r} $. Therefore there is a natural covering map
$$ X_{N} = \Gamma_{N}\setminus\HH \to X = \Gamma\setminus\HH  $$
of degree $ [\Gamma : \Gamma_{N}] = N^{r} $. It follows from a standard fact of algebraic topology that $ X_{N} $ has Euler characteristic $ \Xi(X_{N}) = N^{r}\Xi(X) $. We conclude that the area of the convex core $ \mathcal{N}_{N} $ of $ X_{N} $ grows polynomially in $ N $. Indeed, by (\ref{char}) we have $ \mathrm{area}(\mathcal{N}_{N}) = -2\pi N^{r} \Xi(X). $

The surfaces $ X_{N} $ are the covers of $ X $ giving arbitrarily small spectral gap in Theorem 1 and Theorem 2.

\paragraph{Proof Strategy}
In Section \ref{Expanders} we recall the definition of expander graphs and its connection with the spectral gap problem of hyperbolic surfaces with $ \delta > \frac{1}{2} $.  
Exploiting this connection, we reduce Theorem \ref{First Main Theorem} for $ \delta > \frac{1}{2} $ to a simple statement about the spectral gap of the Cayley graph of the group $ \Gamma/\Gamma_{N}\cong (\ZZ/N\ZZ)^{r} $ with respect to a fixed generating set of $ \ZZ^{r} $.

By the work of Patterson \cite{Patt}, we know that hyperbolic surfaces with $ \delta \leq \frac{1}{2} $ cannot have cusp ends, that is, the convex core $ \mathcal{N} $ of $ X $ is compact. In this case $ X $ is called \textit{convex co-compact}. By the result of Button \cite{Button} a non-elementary, geometrically finite, and convex co-compact surface is a Schottky surface. Hence, for $ \delta \leq \frac{1}{2} $ Theorem \ref{First Main Theorem} follows from Theorem \ref{main theorem} and the rest of this paper is devoted to the proof of Theorem \ref{main theorem}.

We show that the Selberg zeta function of $ \Gamma_{N} $ factorizes as a product of certain L-functions $ L_{\Gamma}(s,\chi) $ over all the characters $ \chi $ of the abelian group $ \Gamma/\Gamma_{N} $. This result is a straightforward application of an analogue of the well-known Artin-Takagi formula of algebraic number theory that we prove in Section \ref{Thermodynamic Formalism Approach for L-Functions}.  

For $ N $ large, a large proportion of the characters of $ \Gamma/\Gamma_{N} $ are close to the identity. Theorem \ref{main theorem} is ultimately a consequence of the fact that every L-function $ L_{\Gamma}(s,\chi) $ associated to a Schottky group $ \Gamma $ and a character $ \chi :\Gamma\to S^{1} $ close to the identity $ \chi =1 $ has to vanish at some point $ s $ near $ s=\delta $. Analytic continuation and the vanishing property of such L-functions will follow from the thermodynamic formalism approach, which we consider in Section \ref{L-Functions and Artin-Takagi Formula}.

\section{Spectral Gap and Expanders}\label{Expanders}

\subsection{Proof of Theorem \ref{First Main Theorem} for large Hausdorff dimension}

Let $ \Gamma $ be as in Theorem \ref{First Main Theorem} and let $ \delta $ be the Hausdorff dimension of the limit set $ \Lambda(\Gamma) $. In this section we prove Theorem \ref{First Main Theorem} under the additional assumption $ \delta > \frac{1}{2} $ by exploiting the connection of uniform spectral gap with expander graphs. Let us recall the definition of the latter.

Let $ \mathcal{G} $ be a finite graph with set of vertices $ \mathcal{V} $ and of degree $ k $. That is, for every vertex $ x\in \mathcal{V} $ there are $ k $ edges adjacent to $ x $. For a subset of vertices $ A\subset \mathcal{V} $ we define its boundary $ \partial A $ as the set of edges with one extremity in $ A $ and the other in $ \mathcal{G}-A. $ The Cheeger isoperimetric constant $ h(\mathcal{G}) $ is defined as
$$ h(\mathcal{G}) = \min \left\{  \frac{\vert \partial A \vert}{\vert A\vert} : A\subset \mathcal{V} \text{ and } 1 \leq \vert A\vert \leq \frac{\vert \mathcal{V}\vert}{2}\right\}. $$
Let $ L^{2}(\mathcal{V}) $ be the Hilbert space of complex-valued functions on $ \mathcal{V} $ with inner product
$$ \langle F, G \rangle_{L^{2}(\mathcal{V})} = \sum_{x\in \mathcal{V}} F(x) \overline{G(x)}. $$
Let $ \Delta $ be the discrete Laplace operator acting on $ L^{2}(\mathcal{V}) $ by
$$ \Delta F(x) = F(x) - \frac{1}{k}\sum_{y\sim x} F(y), $$
where $ F\in L^{2}(\mathcal{V}) $, $ x\in \mathcal{V} $ is a vertex of $ \mathcal{G} $, and $ y\sim x $ means that $ y $ and $ x $ are connected by an edge. The operator $ \Delta $ is self-adjoint and positive. For a finite graph, let $ \lambda_{1}(\mathcal{G}) $ denote the first non-zero eigenvalue of $ \Delta. $ 

The following result due to Alon and Milman \cite{AlonMilman} relates the spectral gap $ \lambda_{1}(\mathcal{G}) $ and Cheeger's isoperimetric constant.

\begin{prop}\label{Spectral Gap and Cheeger}
For finite graphs $ \mathcal{G} $ of degree $ k $ we have
\begin{equation}\label{inequality}
\frac{1}{2}k\cdot \lambda_{1}(\mathcal{G}) \leq h(\mathcal{G}) \leq k \sqrt{\lambda_{1}(\mathcal{G})(1-\lambda_{1}(\mathcal{G}))}.
\end{equation}
\end{prop}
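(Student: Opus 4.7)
The proposition is a discrete Cheeger-type inequality for a $k$-regular graph. The two bounds in (\ref{inequality}) are independent and I would prove them separately. Both rest on the Courant--Fischer variational characterization
$$\lambda_{1}(\mathcal{G}) = \min_{\substack{F\in L^{2}(\mathcal{V}),\, F\neq 0\\ F\perp \mathbf{1}}} \frac{\langle \Delta F, F\rangle_{L^{2}(\mathcal{V})}}{\langle F, F\rangle_{L^{2}(\mathcal{V})}},$$
together with the elementary Dirichlet-type identity $\langle \Delta F, F\rangle_{L^{2}(\mathcal{V})} = \frac{1}{k}\sum_{\{x,y\}\text{ edge}}|F(x)-F(y)|^{2}$, which follows by a direct expansion from $k$-regularity and the definition of $\Delta$.

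For the lower bound $\tfrac{1}{2}k\,\lambda_{1}(\mathcal{G})\leq h(\mathcal{G})$, the plan is to test the variational principle against the centered indicator $F_{A}=\mathbbm{1}_{A}-|A|/|\mathcal{V}|$ of a Cheeger-optimal set $A\subset \mathcal{V}$ with $1\leq |A|\leq |\mathcal{V}|/2$. One computes $\langle F_{A}, F_{A}\rangle = |A|(1-|A|/|\mathcal{V}|)\geq |A|/2$ and, by the Dirichlet identity, $\langle \Delta F_{A}, F_{A}\rangle = |\partial A|/k$ (only the jumps across $\partial A$ contribute). Inserting into the Rayleigh quotient yields $\lambda_{1}(\mathcal{G})\leq 2|\partial A|/(k|A|)= 2h(\mathcal{G})/k$.

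For the upper bound $h(\mathcal{G})\leq k\sqrt{\lambda_{1}(\mathcal{G})(1-\lambda_{1}(\mathcal{G}))}$ I would follow the standard level-set/coarea strategy. Let $f$ be a $\lambda_{1}(\mathcal{G})$-eigenfunction; since $f\perp \mathbf{1}$, after replacing $f$ by $-f$ if necessary the support of $f_{+}:=\max(f,0)$ has at most $|\mathcal{V}|/2$ vertices. For $t\geq 0$ the superlevel set $A_{t}:=\{f_{+}^{2}>t\}$ is then admissible in the definition of $h(\mathcal{G})$, so $|\partial A_{t}|\geq h(\mathcal{G})|A_{t}|$. Integrating in $t$ and using the coarea identities
$$\int_{0}^{\infty}|A_{t}|\, dt = \sum_{x}f_{+}(x)^{2}, \qquad \int_{0}^{\infty}|\partial A_{t}|\, dt = \sum_{\{x,y\}}\bigl|f_{+}(x)^{2}-f_{+}(y)^{2}\bigr|$$
reduces matters to bounding the right-hand sum. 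Factoring $|f_{+}^{2}(x)-f_{+}^{2}(y)| = |f_{+}(x)-f_{+}(y)|\cdot(f_{+}(x)+f_{+}(y))$ and applying Cauchy--Schwarz splits this into a ``gradient'' factor, controlled by $\sqrt{\lambda_{1}(\mathcal{G})}$ via the Dirichlet identity and the eigenvalue equation for $f$, and a ``mass'' factor whose analysis ultimately supplies the factor $\sqrt{1-\lambda_{1}(\mathcal{G})}$.

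The principal obstacle is extracting this sharp $\sqrt{1-\lambda_{1}(\mathcal{G})}$ rather than a routine constant. The crude estimate $(f_{+}(x)+f_{+}(y))^{2}\leq 2(f_{+}(x)^{2}+f_{+}(y)^{2})$ only yields the standard Cheeger bound $h(\mathcal{G})\leq k\sqrt{2\lambda_{1}(\mathcal{G})}$. To sharpen it, one must expand the square and invoke the pointwise eigenvalue identity $\sum_{y\sim x}f(y)=k(1-\lambda_{1}(\mathcal{G}))f(x)$ on the positive support of $f$, where the possible sign changes of $f$ outside $\{f>0\}$ supply the needed cancellation. Ensuring that the truncation from $f$ to $f_{+}$ trades the eigenvalue identity for an inequality in the correct direction, rather than producing error terms that spoil the constant, is the most delicate step and the place where I would expect to spend the bulk of the care.
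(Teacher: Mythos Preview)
The paper does not actually prove this proposition; it is simply quoted and attributed to Alon--Milman \cite{AlonMilman}. So there is no argument in the paper to compare your sketch against.

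Your outline is the standard route and the lower bound is fine as written. For the upper bound, your honest flag on the $\sqrt{1-\lambda_{1}}$ factor is well placed: the coarea--Cauchy--Schwarz argument you describe, together with the identity
\[
\sum_{\{x,y\}}\bigl(f_{+}(x)+f_{+}(y)\bigr)^{2}=2k\sum_{x}f_{+}(x)^{2}-\sum_{\{x,y\}}\bigl(f_{+}(x)-f_{+}(y)\bigr)^{2}
\]
and the truncation inequality $\langle \Delta f_{+},f_{+}\rangle\le \lambda_{1}\|f_{+}\|^{2}$ coming from the pointwise eigenvalue relation, naturally yields $h(\mathcal{G})\le k\sqrt{\lambda_{1}(2-\lambda_{1})}$ rather than $k\sqrt{\lambda_{1}(1-\lambda_{1})}$. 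Indeed the form printed in the paper is vacuous once $\lambda_{1}>1$ (e.g.\ for complete graphs), so it is presumably a misprint for $2-\lambda_{1}$. None of this matters for the application: the paper only uses the upper bound to deduce $h(\mathcal{G}_{N})\to 0$ from $\lambda_{1}(\mathcal{G}_{N})=O(N^{-2})$, for which even the cruder $h\le k\sqrt{2\lambda_{1}}$ is already enough.
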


We note that large first non-zero eigenvalue $ \lambda_{1}(\mathcal{G}) $ implies fast convergence of random walks on $ \mathcal{G} $, that is, high connectivity (see Lubotzky \cite{Lubotzky2}).

\begin{defi}
A family of finite graphs $ \{ \mathcal{G}_{i} \} $ of a fixed degree is called a family of expanders if there exists a constant $ c>0 $ such that $ h(\mathcal{G}_{i})\geq c. $
\end{defi}

The family of graphs we are interested in are built as follows. Let $ \Gamma=\langle S\rangle $ be a Fuchsian group generated by a finite set $ S\subset \mathrm{PSL}_{2}(\RR) $. We will assume that $ S $ is symmetric, i.e. $ S^{-1}=S. $ Let $ (\Gamma_{n})_{n\in \mathbb{N}} $ a sequence of finite index normal subgroups of $ \Gamma $. For each $ n\in \mathbb{N} $ let $ S_{n} $ denote the image of $ S $ under the natural projection $ \Gamma\to \Gamma/\Gamma_{n}=: G_{n} $. Notice that $ S_{n} $ is a symmetric generating set for the group $ G_{n} $. Let $ \mathcal{G}_{n} = \mathcal{G}(\Gamma/\Gamma_{n}, S_{n}) $ denote the Cayley graph of $ G_{n} $ with respect to the generating set $ S_{n} $. That is, the vertices of $ \mathcal{G}_{n} $ are the elements of $ G_{n} $ and two vertices $ x $ and $ y $ are connected by an edge if and only if $ xy^{-1}\in S_{n}. $

The connection of uniform spectral gap with the graphs constructed above comes from the following result.

\begin{prop}\label{expanders and gap}
Let $ \Gamma $ be a finitely generated Fuchsian group without elliptic elements and $ \delta(\Gamma) > \frac{1}{2} $. Let $ (\Gamma_{n})_{n\in \NN} $ be a sequence of finite-index normal subgroups of $ \Gamma $. Assume there exists $ \varepsilon_{0} > 0 $ such that $ \mathrm{gap}(\Gamma_{n}\setminus \HH) \geq \varepsilon_{0} $ for all $ n\in \NN $. Then the graphs $ \mathcal{G}_{n} = \mathcal{G}(\Gamma/\Gamma_{n}, S_{n}) $ form a family of expanders.
\end{prop}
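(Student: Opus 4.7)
The plan is to argue by contrapositive: assuming the graphs $\mathcal{G}_n$ are not a family of expanders, we produce $L^2$-eigenvalues of $\Delta_{X_n}$ clustering near $\lambda_0 = \delta(1-\delta)$, which contradicts the uniform spectral gap hypothesis via the correspondence between resonances $\zeta \in (1/2,1]$ and eigenvalues $\lambda = \zeta(1-\zeta) \in [0,1/4)$ that holds when $\delta > 1/2$.

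First I would set up the contradiction. Suppose $\{\mathcal{G}_n\}$ is not a family of expanders; by Proposition \ref{Spectral Gap and Cheeger}, after passing to a subsequence, $\lambda_1(\mathcal{G}_n) \to 0$. Choose $\ell^2$-normalized eigenfunctions $F_n : G_n \to \CC$ with $\Delta F_n = \lambda_1(\mathcal{G}_n) F_n$; since these are orthogonal to the kernel of $\Delta$, one has $\sum_{g \in G_n} F_n(g) = 0$. Next, because $\delta > 1/2$, Patterson--Sullivan theory provides a $\Gamma$-invariant positive function $\phi_\delta$ on $\HH$ with $\Delta \phi_\delta = \lambda_0 \phi_\delta$ whose descent to $X$ lies in $L^2(X)$; lifting to $X_n$ gives the ground state $\tilde\phi_0 \in L^2(X_n)$ with $\|\tilde\phi_0\|_{L^2(X_n)}^2 = [\Gamma:\Gamma_n]\,\|\phi_\delta\|_{L^2(X)}^2$.

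The heart of the argument is the construction of a test function on $X_n$ that quantitatively transports the graph eigenfunction. Fix a (locally finite) fundamental domain $\mathcal{F}$ for $\Gamma$ and a smooth partition of unity $\{\chi_g\}_{g \in G_n}$ on $X_n$ subordinate to slight enlargements of the translates $g\mathcal{F}$, with $\|\nabla \chi_g\|_\infty$ bounded independently of $n$ and with $\chi_g$ supported in a neighborhood that meets $\chi_{g'}$ only when the cosets $g\Gamma_n, g'\Gamma_n$ are connected by an edge of $\mathcal{G}_n$ (that is, when $g^{-1}g' \in S$). Define
\[
f_n(z) = \Big( \sum_{g \in G_n} F_n(g)\,\chi_g(z) \Big) \phi_\delta(z),
\]
which, by normality of $\Gamma_n$, is $\Gamma_n$-invariant and hence descends to a function on $X_n$. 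The choice $\sum_g F_n(g) = 0$ gives $\langle f_n, \tilde\phi_0\rangle_{L^2(X_n)} = 0$ after a short computation, so $f_n$ is a legitimate test function for the min-max characterization of the second eigenvalue.

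The main estimate — and the step I expect to be the chief technical obstacle — is bounding the Rayleigh quotient of $f_n$. Writing $u_n = \sum_g F_n(g)\chi_g$, the Leibniz rule yields
\[
\int_{X_n} |\nabla f_n|^2 = \int_{X_n} |u_n|^2 |\nabla \phi_\delta|^2 + \int_{X_n} |\phi_\delta|^2 |\nabla u_n|^2 + \int_{X_n} \phi_\delta\, \nabla(|u_n|^2)\cdot \nabla \phi_\delta,
\]
and integration by parts against the eigenvalue equation $\Delta \phi_\delta = \lambda_0 \phi_\delta$ produces the identity
\[
\int_{X_n} |\nabla f_n|^2 - \lambda_0 \int_{X_n} |f_n|^2 = \int_{X_n} |\phi_\delta|^2 |\nabla u_n|^2.
\]
The remaining task is to compare $\int_{X_n}|\phi_\delta|^2|\nabla u_n|^2$ with the graph Dirichlet form $\sum_{g\sim g'}|F_n(g)-F_n(g')|^2$, which equals $k\lambda_1(\mathcal{G}_n)$ up to normalization. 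Because $\nabla u_n$ is supported in the seams where two $\chi_g,\chi_{g'}$ overlap and equals $\sum_g F_n(g)\nabla\chi_g$ there, one rewrites it by adding a local constant and estimates pointwise by differences $|F_n(g)-F_n(g')|$ with $g^{-1}g'\in S$. Using the fact that $\phi_\delta$ is positive and integrable on $\mathcal{F}$ together with the uniform bound on $\|\nabla\chi_g\|_\infty$, one obtains
\[
\int_{X_n}|\phi_\delta|^2 |\nabla u_n|^2 \leq C_X \sum_{\substack{g,g' \in G_n \\ g^{-1}g' \in S}} |F_n(g)-F_n(g')|^2 \leq C'_X\, \lambda_1(\mathcal{G}_n)\,\|F_n\|_{\ell^2}^2,
\]
with $C_X, C'_X$ depending only on $\Gamma$, $S$, $\phi_\delta$, and the partition of unity.

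Combining the estimates and noting $\|f_n\|_{L^2(X_n)}^2 \geq c_X \|F_n\|_{\ell^2}^2$ (by concentrating $\chi_g$ on a set where $\phi_\delta$ is bounded below), the min-max principle yields a second $L^2$-eigenvalue $\lambda_1(X_n) \leq \lambda_0 + C''_X \lambda_1(\mathcal{G}_n)$. Writing this eigenvalue as $s_n(1-s_n)$ with $s_n \in (1/2,1]$ produces a resonance $s_n \in \mathcal{R}_{X_n}\setminus\{\delta\}$ with $\delta - s_n \to 0$, contradicting $\mathrm{gap}(X_n) \geq \varepsilon_0$. Thus $\{\mathcal{G}_n\}$ must be a family of expanders. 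The delicate point throughout is the geometric construction of the partition of unity on the infinite-area surface $X_n$: one must ensure the seams are transverse to the action of $\Gamma$ and that $\|\nabla \chi_g\|_\infty$ and the geometry of supports are controlled independently of $n$, which is where the normality of $\Gamma_n$ and the equivariance of the construction under $\Gamma/\Gamma_n$ are essential.
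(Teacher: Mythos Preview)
Your argument is essentially correct and follows the well-known Brooks transplantation technique: pull the graph eigenfunction up to the cover by a $G_n$-equivariant partition of unity, multiply by the Patterson--Sullivan ground state $\phi_\delta$, and use the ground-state identity to bound the Rayleigh quotient. With $\delta>\tfrac12$ the base eigenfunction is genuinely in $L^2$, the eigenvalue $\lambda_0$ is simple (Patterson), and the essential spectrum starts at $\tfrac14$, so min--max on $\{\tilde\phi_0\}^\perp$ really produces a discrete eigenvalue $\lambda_1(X_n)\le\lambda_0+C\,\lambda_1(\mathcal G_n)$, yielding the contradiction. The only points that need a word of care are (i) arranging that overlaps of the $\chi_g$ are governed by a \emph{fixed} finite set of group elements (the side-pairings of $\mathcal F$), and then noting that the expander property is independent of the choice of generating set of $\Gamma$, so one may compare with the given $S$; and (ii) the infinite-area ends, which are harmless because $\phi_\delta\in L^2$ absorbs the unbounded seams.

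This is, however, a genuinely different route from the paper. The paper never touches test functions or Rayleigh quotients; instead it works entirely in representation theory. Via the Duality Theorem it rephrases the uniform spectral gap as isolation of the complementary-series representation $\pi_{s_0}$ inside the spherical spectrum of $L^2_0(G/\Gamma_n)$ (Lemma~\ref{Intermediate}), then uses induction in stages together with Fell's continuity of induction to deduce that $1_\Gamma$ is isolated among the irreducible constituents of $L^2_0(\Gamma/\Gamma_n)$ (Lemma~\ref{before expanders}); finally a standard Fell-topology/Cayley-graph argument converts this isolation into a uniform Cheeger bound. Your approach is more elementary and quantitative (it gives an explicit inequality $\lambda_1(X_n)-\lambda_0\le C\,\lambda_1(\mathcal G_n)$), at the price of a geometric construction on the surface; the paper's approach is softer and avoids any analysis on $X_n$, at the price of invoking the unitary dual of $\SL_2(\RR)$ and Fell continuity.
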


Let us see how Proposition \ref{expanders and gap} implies Theorem \ref{First Main Theorem} for $ \delta > \frac{1}{2} $.

\begin{proof}[Proof of Theorem \ref{First Main Theorem} for $ \delta > \frac{1}{2} $] 
Let $ \Gamma $ satisfy the assumptions of Theorem \ref{First Main Theorem} together with $ \delta > \frac{1}{2} $ and suppose the theorem is false. Then there exists $ \varepsilon_{0} > 0 $ such that 
$$ \mathrm{gap}'(\Gamma_{N}\setminus \HH) \geq \varepsilon_{0}, $$
where $ \Gamma_{N} = \ker(\varphi_{N}) $ are the subgroups constructed in Section \ref{construction}. Recall that the resonances with $ \R s > \frac{1}{2} $ correspond to $ L^{2} $-eigenvalues $ \lambda = s(1-s) \in [\delta(1-\delta), \frac{1}{4})  $. In particular, all resonances $ s\in \mathcal{R}_{X} $ with $ \R s > \frac{1}{2} $ lie on the real line, so that we may assume 
$$ \mathrm{gap}(\Gamma_{N}\setminus \HH) \geq \varepsilon_{0} $$
for all $ N\in \NN $. Recall that $ \Gamma/ \Gamma_{N} $ is isomorphic to the abelian group $ (\ZZ/N\ZZ)^{r} $. Let $ S $ be a finite, symmetric generating set of $ \Gamma $, let $ S_{N} $ be the image of $ S $ under the projection $ \Gamma \to (\ZZ/N\ZZ)^{r} $, and let $ \mathcal{G}_{N} $ be the Cayley graph of $ (\ZZ/N\ZZ)^{r} $ with respect to $ S_{N} $. 

By Proposition \ref{expanders and gap}, the graphs $ \mathcal{G}_{N} $ form a family of expanders. Thus there exists $ c>0 $ such that for all $ N\in \NN $ we have
\begin{equation}\label{isoperm}
h(\mathcal{G}_{N}) \geq c.
\end{equation}

For $ N $ large enough the projection $ S \to (\ZZ/N\ZZ)^{r} $ is injective, because $ S $ is a finite set. Since we are only interested in large $ N $, we may therefore assume that the $ \mathcal{G}_{N} $ are Cayley graphs of $ (\ZZ/N\ZZ)^{r} $ with respect to a fixed set of $ S\subset \ZZ^{r} $.

The space $ L^{2}\left( (\ZZ/N\ZZ)^{r} \right) $ is spanned by the functions $ f_{a}(x)=e\left( \frac{1}{N}\langle a, x\rangle \right) $ for $ a\in (\ZZ/N\ZZ)^{r} $. Here, $ e(t):=e^{2\pi it} $ and $ \langle a, x\rangle := a_{1}x_{1} + \cdots + a_{r}x_{r}. $
Moreover,
\begin{align*}
\Delta f_{a}(x) &=  f_{a}(x) - \frac{1}{\vert S\vert}\sum_{s\in S} f_{a}(x+s)\\
&= f_{a}(x) - \frac{1}{\vert S\vert}\sum_{s\in S} e\left( \frac{1}{N}\langle a, s\rangle\right) f_{a}(x)\\
&= f_{a}(x) - \frac{1}{\vert S\vert} \sum_{s\in S} \cos\left( \frac{2\pi}{N} \langle a, s\rangle \right) f_{a}(x)\\
&= \frac{1}{\vert S\vert} \sum_{s\in S} \left( 1- \cos\left( \frac{2\pi}{N} \langle a, s\rangle \right)\right) f_{a}(x),
\end{align*}
where we exploited the symmetry of $ S $ in the third line. Thus for every $ a\in (\ZZ/N\ZZ)^{r}  $ the function $ f_{a} $ is an eigenfunction of $ \Delta $ with eigenvalue
$$ \lambda_{a} = \frac{1}{\vert S\vert} \sum_{s\in S} \left( 1- \cos\left( \frac{2\pi}{N} \langle a, s\rangle \right)\right). $$
Fix an element $ a \in \ZZ^{r} $ which is not orthogonal to all the elements in $ S $. For $ N $ large enough, we can identify $ a \mod N \in (\ZZ/N\ZZ)^{r} $ with $ a $ and we will continue to write $ a $. Then we have $ \lambda_{a} > 0 $. 
Using $ 1-\cos(2\pi x) \ll x^{2} $ for $ \vert x\vert $ sufficiently small, we deduce that
$$ 0<\lambda_{a}\ll \frac{1}{N^{2}}.  $$
It follows that the first non-zero eigenvalue $ \lambda_{1}(\mathcal{G}_{N}) $ is bounded by $ O(\frac{1}{N^{2}}) $. With the second inequality in (\ref{inequality}) we conclude that $ h(\mathcal{G}_{N}) = O(\frac{1}{N}), $ which contradicts (\ref{isoperm}).
\end{proof}

\subsection{Proof of Proposition \ref{expanders and gap}}

A very similar statement to that of Proposition \ref{expanders and gap} was given by Gamburd \cite[Section~7]{Gamburd}. The key ingredient in Gamburd's proof is Fell's continuity of induction and we will follow this line of thought. 

For the remainder of this section set $ G = \SL_{2}(\RR) $ and let $ \hat{G} $ be its unitary dual, that is, the set of equivalence classes of (continuous) irreducible unitary representations of $ G $. We endow the set $ \hat{G} $ with the \textit{Fell topology}. We refer the reader to \cite{fell} and  \cite[Chapter~F]{Property(T)} for more background on the Fell topology. A representation of $ G $ is called \textit{spherical} if it has a non-zero $ K $-invariant vector, where $ K = \mathrm{SO}(2) $. Let us consider the subset $ \hat{G}^{1} \subset \hat{G} $ of irreducible spherical unitary representations. 

According to Lubotzky \cite[Chapter~5]{Lubotzky}, the set $ \hat{G}^{1} $ can be parametrized as 
$$ \hat{G}^{1} = i\RR^{+} \cup \left[ 0, \frac{1}{2}\right], $$
where $ s\in i\RR^{+} $ corresponds to the spherical unitary principal series representations, $ s\in (0, \frac{1}{2}) $ corresponds to the complementary series representation, and $ s = \frac{1}{2} $ corresponds to the trivial representation. See also Gelfand, Graev, Pyatetskii-Shapiro \cite[Chapter~1 §3]{Gelfand} for a classification of the irreducible (spherical and non-spherical) unitary representations with a different parametrization. Moreover the Fell topology on $ \hat{G}^{1} $ is the same as that induced by viewing the set of parameters $ s $ as a subset of $ \CC $, see \cite[Chapter~5]{Lubotzky}. In particular, the spherical unitary principal series representations are bounded away from the identity.

Let us now recall the connection between the exceptional eigenvalues $ \lambda \in (0, \frac{1}{4}) $ and the complementary series representation.  Consider the (left) quasiregular representation $ (\lambda_{G/\Gamma}, L^{2}(G/\Gamma)) $ of $ G $ defined by 
$$ \lambda_{G/\Gamma}(g)f(h\Gamma) = f(hg^{-1}\Gamma). $$ 
(We will denote this representation simply by $ L^{2}(G/\Gamma) $.) Define the function $ s(\lambda) = \sqrt{1/4 - \lambda} $ for $ \lambda\in (0, \frac{1}{4}) $. Then, $ \lambda\in (0, \frac{1}{4}) $ is an exceptional eigenvalue of $ \Delta_{\Gamma\setminus \HH} $ if and only if the complementary series $ \pi_{s(\lambda)} $ occurs as a subrepresentation of $  L^{2}(G/\Gamma) $. This is the so-called Duality Theorem \cite[Chapter~1 §4]{Gelfand}.

Let us return to the proof of Proposition \ref{expanders and gap}. Let $ \Gamma $ and $ (\Gamma_{n})_{n\in \NN} $ be as in Proposition \ref{expanders and gap}. Let $ \lambda_{0}(\Gamma) = \delta(1-\delta) = \inf \Omega(\Gamma\setminus \HH) $ denote the bottom of the spectrum of $ \Delta_{\Gamma\setminus \HH} $ on $ L^{2}(\Gamma\setminus \HH) $. Since $ \Gamma_{n} $ is a finite-index subgroup of $ \Gamma $, we have $ \delta(\Gamma_{n}) = \delta $ and consequently
$$ \lambda_{0}(\Gamma_{n}) = \lambda_{0}(\Gamma) =:\lambda_{0} $$
for each $ n\in \NN $. 
Let $ V_{s_{0}} $ be the invariant subspace corresponding to the representation $ \pi_{s_{0}} $ and let $ L_{0}^{2}(G/\Gamma_{n}) $ be its orthogonal complement in $ L^{2}(G/\Gamma_{n}). $ For each $ n\in \NN $ we can decompose the quasiregular representation of $ G $ into direct sum of subrepresentations 
$$ L^{2}(G/\Gamma_{n}) =  L_{0}^{2}(G/\Gamma_{n}) \oplus V_{s_{0}}. $$
Recall that $ \lambda_{0} $ is a simple eigenvalue by the result of Patterson \cite{Patt2}. By the Duality Theorem it follows that $ V_{s_{0}} $ is one-dimensional. The following lemma provides us with a link between uniform spectral gap and representation theory.

\begin{lem}[Representation-theoretic reformulation of uniform spectral gap]\label{Intermediate}
Let $ \mathcal{R} \subset \hat{G}^{1} $ be the following set:
$$ \mathcal{R} = \bigcup_{n\in \mathbb{N}} \{ (\pi, \mathcal{H}) : \pi \text{ is spherical irreducible unitary subrep. of } L^{2}_{0}(G/\Gamma_{n}) \} / \sim, $$
where $ \sim $ denotes the equivalence of representations. Then the following are equivalent.
\begin{itemize}
\item[(i)] There exists $ \varepsilon_{0} > 0 $ such that $ \mathrm{gap}(\Gamma_{n}\setminus \HH) \geq \varepsilon_{0} $ for all $ n\in \NN $.
\item[(ii)] The representation $ \pi_{s_{0}} $ is isolated in the set $ \mathcal{R} \cup \{ \pi_{s_{0}}\} $ with respect to the Fell topology. 
\end{itemize} 
\end{lem}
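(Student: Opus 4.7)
The plan is to translate the spectral-gap condition on resonances into an eigenvalue condition (using $\delta>\tfrac12$), apply the Duality Theorem to rephrase eigenvalues as spherical subrepresentations, and then use that the Fell topology on $\hat G^1$ agrees with the Euclidean topology on $i\RR^{+}\cup[0,\tfrac12]$. Throughout I write $s_0=\delta-\tfrac12\in(0,\tfrac12]$, so that $\pi_{s_0}$ is the complementary series representation corresponding to $\lambda_0=\delta(1-\delta)$ via $s(\lambda)=\sqrt{\tfrac14-\lambda}$. Since we are free to shrink $\varepsilon_0$, I will assume $\varepsilon_0<s_0$; any resonance $\zeta$ of $\Gamma_n\setminus\HH$ with $\R \zeta\leq\tfrac12$ then satisfies $\delta-\R\zeta\geq s_0>\varepsilon_0$, so the infimum in $\mathrm{gap}(\Gamma_n\setminus\HH)$ is realized only by exceptional $L^2$-eigenvalues $\lambda\in[\lambda_0,\tfrac14)$, which are real resonances $s=\tfrac12+\sqrt{\tfrac14-\lambda}\in(\tfrac12,\delta]$.

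For the implication $(i)\Rightarrow(ii)$: assume uniform gap $\varepsilon_0$. Any $\pi\in\mathcal{R}$ is either a complementary series $\pi_{t}$ with $t\in[0,\tfrac12)$ or a principal series $\pi_{i\tau}$ with $\tau>0$ (no discrete embedded eigenvalues above $\tfrac14$ are possible by Lax--Phillips, so principal series do not contribute through $L^2$-eigenvalues, but they may appear as weak limits). Principal series have Euclidean distance at least $s_0$ from $\pi_{s_0}$, so they lie outside the ball $B(s_0,\varepsilon_0)$ by our choice of $\varepsilon_0$. A complementary series $\pi_t$ occurring in $L^2_0(G/\Gamma_n)$ corresponds, by the Duality Theorem, to an eigenvalue $\lambda=\tfrac14-t^2>\lambda_0$, i.e.~to a resonance $s=\tfrac12+t<\delta$; the gap hypothesis forces $\delta-s=s_0-t\geq\varepsilon_0$, hence $t\leq s_0-\varepsilon_0$ and again $\pi_t\notin B(s_0,\varepsilon_0)$. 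Since the Fell topology on $\hat G^1$ coincides with the Euclidean topology on $i\RR^+\cup[0,\tfrac12]$, the ball $B(s_0,\varepsilon_0)$ is a Fell-open neighborhood of $\pi_{s_0}$ disjoint from $\mathcal{R}$, proving isolation.

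For the converse $(ii)\Rightarrow(i)$: if $\pi_{s_0}$ is Fell-isolated in $\mathcal{R}\cup\{\pi_{s_0}\}$, then by the same identification of topologies there is some $\varepsilon>0$ such that no $\pi\in\mathcal{R}$ lies in $B(s_0,\varepsilon)$. For each $n$, this rules out complementary series $\pi_t$ with $t\in(s_0-\varepsilon,s_0)$ as a subrepresentation of $L^2_0(G/\Gamma_n)$ (the case $t>s_0$ would force an eigenvalue strictly below $\lambda_0$, which is impossible since $\lambda_0$ is the bottom of the spectrum for every $\Gamma_n$). By the Duality Theorem, $\Delta_{X_n}$ has no $L^2$-eigenvalue in the interval $(\lambda_0,\lambda_0+\eta)$ with $\eta:=(s_0-\varepsilon/2)\varepsilon-\mathrm{const}$ chosen in a standard way; equivalently, every exceptional eigenvalue gives a resonance $s\in(\tfrac12,\delta-\varepsilon]$. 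Combined with the observation that all other resonances have $\R\zeta\leq\tfrac12$ and thus $\delta-\R\zeta\geq s_0$, we obtain $\mathrm{gap}(\Gamma_n\setminus\HH)\geq\min(\varepsilon,s_0)$, uniformly in $n$. The main technical point---and essentially the only nontrivial input---is the Duality Theorem together with the identification of Fell and Euclidean topologies on $\hat G^1$; once these are in hand the proof reduces to the bookkeeping above.
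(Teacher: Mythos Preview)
Your argument is correct and follows essentially the same route as the paper: translate the resonance gap into an eigenvalue gap via the correspondence between resonances with $\R s>\tfrac12$ and exceptional eigenvalues, invoke the Duality Theorem to pass to complementary series subrepresentations, and then use that the Fell topology on $\hat G^{1}$ agrees with the Euclidean topology on $i\RR^{+}\cup[0,\tfrac12]$. The only blemish is the expression $\eta:=(s_{0}-\varepsilon/2)\varepsilon-\text{const}$, which is garbled (the correct value is $\eta=\varepsilon(2s_{0}-\varepsilon)$), but you immediately restate the conclusion in the equivalent and correct form ``every exceptional eigenvalue gives a resonance $s\in(\tfrac12,\delta-\varepsilon]$'', so the argument goes through.
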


\begin{proof}
Since the resonances $ s\in \mathcal{R}_{X_{n}} $ with $ \R s > \frac{1}{2} $ correspond to the eigenvalues $ \lambda = s(1-s)\in [\lambda_{0}, \frac{1}{4}) $, the uniform spectral gap condition (i) can be stated as follows. There exists $ \varepsilon_{1} > 0 $ such that for all $ n\in \NN $ we have
\begin{equation}\label{eigenvalue gap}
\Omega(\Gamma_{n}\setminus \HH) \cap [0, \lambda_{0} + \varepsilon_{1}) = \{ \lambda_{0} \}.
\end{equation} 
Now we can reformulate (\ref{eigenvalue gap}) in representation-theoretic language. Set $ s_{0} = s(\lambda_{0}). $ Then by the Duality Theorem, there exists $ \varepsilon > 0 $ such that for all $ n\in \NN $ and all $ s\in (s_{0}-\varepsilon, \frac{1}{2}] $, the complementary series representation $ \pi_{s} $ does not occur as a subrepresentation of $ L^{2}(G/\Gamma_{n}). $ Since $ V_{s_{0}} $ is one-dimensional (and each representation $ \pi_{s} $ with $ s\neq \frac{1}{2} $ is infinite-dimensional), (i) is equivalent to 
\begin{equation}\label{isolation}
\mathcal{R} \cap \left( s_{0}-\varepsilon, \frac{1}{2}\right] = \{  s_{0}\}. 
\end{equation}
Since the Fell topology on $ \hat{G}^{1} $ is equivalent to the one induced by viewing $ \hat{G}^{1} $ as the subset $ i\RR^{+} \cup \left[ 0, \frac{1}{2}\right] $ of the the complex plane, the equivalence of (i) and (ii) is now evident. 
\end{proof}

Let $ L^{2}(\Gamma/\Gamma_{n}) $ be the Hilbert space of complex valued functions on the (finite) set of left cosets $ \Gamma/\Gamma_{n} $ with inner product
$$ \langle F,G \rangle_{L^{2}(\Gamma/\Gamma_{n})} = \sum_{x\in \Gamma/\Gamma_{n}} F(x) \overline{G(x)}, $$
and let $ L_{0}^{2}(\Gamma/\Gamma_{n}) = \{ 1 \}^{\perp} $ be the subspace of functions orthogonal to the constant functions. Let $ 1_{\Gamma_{n}} $ denote the trivial representation of $ \Gamma_{n} $ on $ \CC $. Then the induced representation $ Ind_{\Gamma_{n}}^{\Gamma} 1_{\Gamma_{n}} $ is equivalent to the (left) quasiregular representation $ (\lambda_{\Gamma/\Gamma_{n}}, L^{2}(\Gamma/\Gamma_{n})) $ of $ \Gamma $  defined by 
$$ (\lambda_{\Gamma/\Gamma_{n}}(\gamma)F)(h\Gamma_{n} ) = (\gamma.F)(h\Gamma_{n})  = F(h\gamma^{-1} \Gamma_{n}). $$
The action of $ \Gamma $ on $ L^{2}(\Gamma/\Gamma_{n}) $ given by $ \gamma.F = \lambda_{\Gamma/\Gamma_{n}}(\gamma)F $ is transitive. Hence the only $ \Gamma $-fixed vectors are the constants. Thus we can decompose the representation of $ \Gamma $ on $ L^{2}(\Gamma/\Gamma_{n}) $ into a direct of subrepresentations 
$$ L^{2}(\Gamma/\Gamma_{n}) = L_{0}^{2}(\Gamma/\Gamma_{n}) \oplus \CC, $$
and $ (1_{\Gamma}, \CC) $ does not occur as a subrepresentation of $ L_{0}^{2}(\Gamma_{n}\setminus \HH). $

Consider the following subset of $ \hat{\Gamma} $:
$$ \mathcal{T} = \bigcup_{n\in \mathbb{N}} \{ (\rho, V) : \rho \text{ is irreducible unitary subrepresentation of } L^{2}_{0}(\Gamma/\Gamma_{n}) \} / \sim, $$
We claim the following.

\begin{lem}\label{before expanders}
Assume that one of the equivalent statements in Lemma \ref{Intermediate} holds true. Then the trivial representation $ 1_{\Gamma} $ is isolated in $ \mathcal{T} \cup \{ 1_{\Gamma} \} $ with respect to the Fell topology.
\end{lem}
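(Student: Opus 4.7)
The plan is to argue by contradiction, following the strategy of Gamburd \cite{Gamburd}, with Fell's continuity of induction as the main tool. Suppose $1_\Gamma$ is not isolated in $\mathcal{T}\cup\{1_\Gamma\}$ in the Fell topology. Then there exists a sequence $(\rho_k)_{k\in\NN}$ in $\mathcal{T}$ converging to $1_\Gamma$; equivalently, $1_\Gamma$ is weakly contained in $\{\rho_k\}$, where each $\rho_k$ is an irreducible unitary subrepresentation of $L^2_0(\Gamma/\Gamma_{n_k})$ for some $n_k\in\NN$. Fell's continuity of induction then yields
$$L^2(G/\Gamma) \;=\; \mathrm{Ind}_\Gamma^G\, 1_\Gamma \;\text{ is weakly contained in }\; \bigl\{\mathrm{Ind}_\Gamma^G\, \rho_k : k\in\NN\bigr\}.$$
Since $\pi_{s_0}$ occurs as an irreducible subrepresentation of $L^2(G/\Gamma)$ by the Duality Theorem and Patterson \cite{Patt2}, it is also weakly contained in the family $\{\mathrm{Ind}_\Gamma^G\,\rho_k\}$.

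Next I would locate the induced representations inside $L^2_0(G/\Gamma_{n_k})$. Induction in stages gives $\mathrm{Ind}_\Gamma^G L^2(\Gamma/\Gamma_{n_k})\cong L^2(G/\Gamma_{n_k})$, and the decomposition $L^2(\Gamma/\Gamma_{n_k})=\CC\oplus L^2_0(\Gamma/\Gamma_{n_k})$ produces the orthogonal splitting
$$L^2(G/\Gamma_{n_k}) \;=\; L^2(G/\Gamma) \,\oplus\, \mathrm{Ind}_\Gamma^G L^2_0(\Gamma/\Gamma_{n_k}).$$
Since the unique copy of $\pi_{s_0}$ in $L^2(G/\Gamma_{n_k})$ sits in the first summand, each $\mathrm{Ind}_\Gamma^G\,\rho_k$ embeds as a subrepresentation of $L^2_0(G/\Gamma_{n_k})$ and contains no copy of $\pi_{s_0}$. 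Exploiting that $G=\SL_2(\RR)$ is of type I, I would then upgrade the weak containment of $\pi_{s_0}$ to a Fell-convergent sequence of spherical irreducible subrepresentations, roughly as follows: choose unit vectors $w_k\in\mathrm{Ind}_\Gamma^G\,\rho_k$ whose diagonal matrix coefficients approximate the spherical function $\phi_{s_0}(g)=\langle \pi_{s_0}(g)v_{s_0},v_{s_0}\rangle$ uniformly on compact subsets of $G$, average over $K=\mathrm{SO}(2)$ to obtain $K$-invariant vectors whose diagonal matrix coefficients are $K$-bi-invariant positive definite functions still close to $\phi_{s_0}$, and decompose them spectrally over the spherical dual $\hat{G}^1$. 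Since $s_0\in (0,\tfrac{1}{2}]$ is bounded away from $i\RR^+$, the portion of the spectral measure clustering near $s_0$ is atomic and supported on parameters of genuine spherical irreducible subrepresentations of $\mathrm{Ind}_\Gamma^G\,\rho_k$, yielding subrepresentations $\pi_k\subset \mathrm{Ind}_\Gamma^G\,\rho_k$ with Fell-parameters $s_k\to s_0$.

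Each $\pi_k$ is then a spherical irreducible subrepresentation of $L^2_0(G/\Gamma_{n_k})$ inequivalent to $\pi_{s_0}$, so $\pi_k\in\mathcal{R}$, and the Fell convergence $\pi_k\to\pi_{s_0}$ contradicts the isolation of $\pi_{s_0}$ in $\mathcal{R}\cup\{\pi_{s_0}\}$ supplied by condition (ii) of Lemma \ref{Intermediate}, completing the argument. The hard step is the middle one: converting the abstract weak containment of the irreducible $\pi_{s_0}$ in a family of reducible induced representations into an honest Fell-convergent sequence of \emph{spherical} irreducible subrepresentations. This requires the type I structure of $\SL_2(\RR)$, the explicit parametrization of $\hat{G}^1$ as $i\RR^+\cup[0,\tfrac{1}{2}]$ recalled in this section, and a careful bookkeeping of spherical matrix coefficients and their spectral measures; everything else reduces to essentially formal manipulations with Fell's continuity of induction and induction in stages.
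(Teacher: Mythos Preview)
Your argument follows the same skeleton as the paper's: argue by contradiction, take a sequence $\tau_j\in\mathcal{T}$ converging to $1_\Gamma$, apply Fell's continuity of induction together with $\pi_{s_0}\prec \mathrm{Ind}_\Gamma^G 1_\Gamma$ to get $\pi_{s_0}$ weakly contained in $\{\mathrm{Ind}_\Gamma^G\tau_j\}$, and use induction in stages to embed each $\mathrm{Ind}_\Gamma^G\tau_j$ into $L_0^2(G/\Gamma_{n_j})$.

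The divergence is in the last step. The paper simply asserts that ``since $\tau$ is unitary and irreducible, so is $\mathrm{Ind}_\Gamma^G\tau$,'' places $\mathrm{Ind}_\Gamma^G\tau_j$ directly in $\mathcal{R}$, and concludes $\pi_{s_0}\in\overline{\mathcal{R}}$. That irreducibility claim is not justified (and is false in general for induction from a discrete subgroup of $\SL_2(\RR)$), so the paper's version is really a sketch at this point. You correctly identify this as the nontrivial part and supply an honest argument: pick almost-invariant vectors, $K$-average to land among spherical vectors, and use the spectral decomposition over $\hat G^1$, exploiting that the portion of the spectral measure on $(0,\tfrac12]$ is purely atomic (finitely many eigenvalues below $\tfrac14$) to produce genuine spherical irreducible subrepresentations $\pi_k\subset \mathrm{Ind}_\Gamma^G\rho_k\subset L_0^2(G/\Gamma_{n_k})$ with parameters $s_k\to s_0$. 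Your observation that the unique copy of $\pi_{s_0}$ already sits in the $L^2(G/\Gamma)$ summand (Patterson's simplicity of $\lambda_0$) guarantees $\pi_k\not\cong\pi_{s_0}$, so indeed $\pi_k\in\mathcal{R}$ and you obtain the desired contradiction with Lemma~\ref{Intermediate}. In short: same strategy, but your version fills a gap the paper glosses over.
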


\begin{proof} Let $ K $ be a closed subgroup of a locally compact group $ H $. Given a unitary representation $ (\pi, V) $ of $ K $, the induced representation $ Ind_{K}^{H} \pi $ of $ H $ is defined as follows. Let $ \mu $ be a quasi-invariant regular Borel measure on $ H/K $ and set
\begin{equation}\label{induced representation}
Ind_{K}^{H} \pi := \{ f : H\to V : f(hk) = \pi(k^{-1})f(h) \text{ for all } k\in K \text{ and } f\in L^{2}_{\mu}(H/K) \}.
\end{equation}
Note that the requirement $ f\in L^{2}_{\mu}(H/K) $ makes sense, since the norm of $ f(g) $ is constant on each left coset of $ H $. The action of $ G $ on $ Ind_{H}^{G} \pi $ is defined by
$$ g.f(x) = f(g^{-1}x) $$ 
for all $ x,g\in G $, $ f\in Ind_{H}^{G} \pi. $ We also note that the equivalence class of the induced representation $ Ind_{K}^{H} \pi $ is independent of the choice of $ \mu $. We refer the reader to \cite[Chapter~E]{Property(T)} for a more thorough discussion on properties of induced representations.

If two representations $ (\pi_{1}, \mathcal{H}_{1}) $ and $ (\pi_{2}, \mathcal{H}_{2}) $ are equivalent, we write $ \mathcal{H}_{1} = \mathcal{H}_{2} $ by abuse of notation. Using induction by stages (see \cite{Folla-95} or \cite{Gaal-73} for a proof) we have
\begin{align*}
V_{s_{0}} \oplus L_{0}^{2}(G/\Gamma_{n}) &= L(G/\Gamma_{n})\\
&= Ind_{\Gamma_{n}}^{G} 1_{\Gamma_{n}}\\
&= Ind_{\Gamma}^{G} Ind_{\Gamma_{n}}^{\Gamma} 1_{\Gamma_{n}}\\
&= Ind_{\Gamma}^{G} L^{2}(\Gamma/\Gamma_{n})\\
&= Ind_{\Gamma}^{G} 1_{\Gamma} \oplus Ind_{\Gamma}^{G} L_{0}^{2}(\Gamma/\Gamma_{n})\\
&= V_{s_{0}} \oplus L_{0}^{2}(G/\Gamma) \oplus Ind_{\Gamma}^{G} L_{0}^{2}(\Gamma/\Gamma_{n}).
\end{align*}
Choose an index $ n\in \NN $ and an irreducible unitary subrepresentation $ (\tau, V) $ of $ L_{0}^{2}(\Gamma/\Gamma_{n}) $. The above calculation implies that $ Ind_{\Gamma}^{G} \tau $ is a unitary subrepresentation of $ L_{0}^{2}(G/\Gamma_{n}) $. Since $ \tau $ is unitary and irreducible, so is $ Ind_{\Gamma}^{G} \tau $. Moreover $ Ind_{\Gamma}^{G} \tau $ is a spherical representation of $ G $, since any non-zero function $ f\in L^{2}(\HH/\Gamma) $ and non-zero vector $ v\in V $ gives rise to a non-zero $ K $-invariant function $ F\in Ind_{\Gamma}^{G} \tau $. Indeed, we have $ \HH \cong K\setminus G $, so that we my view $ f $ as function $ f : G\to \CC $ satisfying $ f(kg\gamma) = f(g) $ for all $ g\in G, k\in K, \gamma\in \Gamma $. Now one easily verifies that $ F = fv : G\to V $ belongs to $ Ind_{\Gamma}^{G} \tau $ and is invariant under $ K $. In other words, $ Ind_{\Gamma}^{G} \tau $ belongs to $ \mathcal{R} $.

Now suppose the lemma is false. Then there exists a sequence $ (\tau_{j})_{j\in \NN} \subset \mathcal{T} $ that converges to $ 1_{\Gamma} $ as $ j\to \infty $. On the other hand, $ \pi_{s_{0}} $ is weakly contained in $ Ind_{\Gamma}^{G} 1_{\Gamma} $. By Fell's continuity of induction \cite{fell} we have 
$$ \pi_{s_{0}} \prec Ind_{\Gamma}^{G} 1_{\Gamma} = \lim_{j\to \infty} Ind_{\Gamma}^{G} \tau_{j} \in \overline{\mathcal{R}},   $$
which contradicts Lemma \ref{Intermediate}.
\end{proof}

We can now prove Proposition \ref{expanders and gap}.

\begin{proof}[Proof of Proposition \ref{expanders and gap}]
Let us recall the definition of the Fell topology on $ \hat{\Gamma} $ (for further reading consult \cite[Chapter~F]{Property(T)}). For an irreducible unitary representation $ (\pi, V) $ of $ \Gamma $, for a unit vector $ \xi\in V $, for a finite set $ Q\subset \Gamma $, and for $ \varepsilon > 0 $ let us define the set  $ W(\pi, \xi, Q, \varepsilon) $ that consists of all irreducible unitary representations $ (\pi', V') $ of $ \Gamma $ with the following property. There exists a unit vector $ \xi'\in V' $ such that
$$ \sup_{\gamma\in Q} \vert \langle \pi(\gamma)\xi, \xi\rangle_{V} - \langle \pi'(\gamma)\xi', \xi'\rangle_{V'} \vert < \varepsilon. $$
The Fell topology is generated by the sets $ W(\pi, \xi, Q, \varepsilon) $. By Lemma \ref{before expanders} and the definition of the Fell topology, there exists $ c_{0} = c_{0}(\Gamma, S) > 0 $ only depending on $ \Gamma $ and the generating set $ S $ of $ \Gamma $, but not on $ n $, such that for all $ F\in L_{0}^{2}(\Gamma/\Gamma_{n}) $
\begin{equation}\label{Implication}
\sup_{\gamma\in S} \vert \langle \gamma.F - F, F \rangle_{L^{2}(\Gamma/\Gamma_{n})}  \vert  \geq c_{0}\Vert F\Vert^{2}.
\end{equation}
By the Cauchy-Schwarz inequality we have 
$$ \sup_{\gamma\in S}\Vert \gamma .F - F \Vert  \geq c_{0}\Vert F\Vert. $$
Set $ \mathcal{V}_{n} = \Gamma/\Gamma_{n} $. Fix a non-empty subset $ A $ of $ \mathcal{V}_{n} $ with $ \vert A\vert \leq \frac{1}{2}\vert\mathcal{V}_{n}\vert $ and define the function
$$ F(x) = 
\begin{cases}
\vert \mathcal{V}_{n}\vert - \vert A\vert, & \text{ if } x\in A\\
-\vert A\vert & \text{ if } x\notin A.
\end{cases}
$$
One can verify that $ F\in L_{0}^{2}(\Gamma/\Gamma_{n}) $ and $ \Vert F\Vert^{2} = \vert A\vert\vert \mathcal{V}_{N} \vert (\vert \mathcal{V}_{N}\vert - \vert A\vert). $ On the other hand,
$$ \Vert \gamma .F - F \Vert^{2} = \vert \mathcal{V}_{n}\vert^{2} E_{\gamma}(A, \mathcal{V}_{n}\setminus A), $$
where
$$ E_{\gamma}(A,B) := \left\vert \{  x\in \mathcal{V}_{N} :  x\in A \text{ and } x\gamma \in B \text{ or } x\in B \text{ and } x\gamma \in A \} \right\vert . $$
Therefore there exists $ \gamma\in S $ such that
\begin{align*}
E_{\gamma}(A, \mathcal{V}_{N}\setminus A)
= \frac{\Vert \gamma. F - F \Vert^{2}}{\vert \mathcal{V}_{N}\vert^{2}}
\geq \frac{c_{0}^{2}\Vert F\Vert^{2}}{\vert \mathcal{V}_{N}\vert^{2}} = c_{0}^{2} \left( 1-\frac{\vert A\vert}{\vert \mathcal{V}_{N}\vert} \right) \vert A\vert.
\end{align*}
Thus we obtain a lower bound for the size of the boundary of the $ A $ in the graph $ \mathcal{G}_{n} $:
$$ \vert \partial A\vert \geq \frac{1}{2} \sup_{\gamma\in S} E_{\gamma}(A, \mathcal{V}_{n}\setminus A) \geq  \frac{c_{0}^{2}}{2} \left( 1-\frac{\vert A\vert}{\vert \mathcal{V}_{n}\vert} \right) \vert A\vert \geq \frac{c_{0}^{2}}{4} \vert A\vert. $$
Consequently, $ h(\mathcal{G}_{n}) \geq c_{0}^{2}/4 $ for all $ n\in \NN. $ In other words, the graphs $ \mathcal{G}_{n}=\mathcal{G}(\Gamma/\Gamma_{n}, S) $ form a family of expanders.
\end{proof}

\section{L-Functions and Artin-Takagi Formula}\label{L-Functions and Artin-Takagi Formula}

In this section we turn our attention to \textit{L-functions}.

\begin{defi}
For a hyperbolic surface $ X=\Gamma\setminus \HH $ and a finite-dimensional unitary representation $ \rho : \Gamma \to \mathrm{U}(V) $, the L-function $ L_{\Gamma}(s,\rho) $ is defined for $ \R s \gg 0 $ by the formal product
\begin{equation}\label{prod}
L_{\Gamma}(s,\rho) := \prod_{\overline{\gamma}\in \overline{\Gamma}_{p}} \prod_{k=0}^{\infty} \det\left( 1-\rho(\gamma)e^{-(s+k)\ell(\gamma)}\right).
\end{equation}
\end{defi}

Assume that $ (\rho, V) $ is a unitary representation of dimension $ d $. Then for all $ \gamma\in \Gamma $ the eigenvalues of $ \rho(\gamma)\in \mathrm{End}(V) $ all lie on the unit circle $ \{ z\in \mathbb{C} : \vert z\vert = 1 \}. $ Therefore 
\begin{equation}\label{estimate}
\left( 1-e^{-(\R s+k)\ell(\gamma)}\right)^{d} \leq \left\vert \det\left( 1-\rho(\gamma)e^{-(s+k)\ell(\gamma)}\right)  \right\vert \leq \left( 1+e^{-(\R s+k)\ell(\gamma)}\right)^{d} .
\end{equation}
Combining (\ref{estimate}) with the product expression (\ref{prod}), we obtain the lower bound $ Z_{\Gamma}(\R s)^{d} \leq \vert  L_{\Gamma}(s,\rho)\vert $. Using the elementary estimate $ 1+x\leq e^{x} $, we obtain the upper bound
\begin{align*}
\vert  L_{\Gamma}(s,\rho)\vert &\leq \prod_{\overline{\gamma}\in \overline{\Gamma}_{p}} \prod_{k=0}^{\infty} \left( 1+e^{-(\R s+k)\ell(\gamma)}\right)^{d}\\
&\leq \exp\left( d \sum_{\overline{\gamma}\in \overline{\Gamma}_{p}} \sum_{k=0}^{\infty} e^{-(\R s+k)\ell(\gamma)} \right)\\
&= \exp\left( d \sum_{\overline{\gamma}\in \overline{\Gamma}_{p}} e^{-\ell(\gamma) \R s}\frac{1}{1-e^{-\ell(\gamma)}} \right).
\end{align*}
Letting $ \ell_{0}>0 $ denote the minimal length among all the closed geodesics of $ \Gamma\setminus \HH $, we deduce that
\begin{equation}\label{trivialineqL}
Z_{\Gamma}(\R s)^{d} \leq \vert  L_{\Gamma}(s,\rho)\vert \leq \exp\left( \frac{d}{1-e^{-\ell_{0}}} \sum_{\overline{\gamma}\in \overline{\Gamma}_{p}} e^{-\ell(\gamma) \R s} \right).
\end{equation}
Since the series 
$$ \sum_{\overline{\gamma}\in \overline{\Gamma}_{p}} e^{-\ell(\gamma) s} $$ 
converges absolutely and uniformly on compact sets of the half-plane $ \R s > \delta $, so does the infinite product (\ref{prod}). Moreover the lower bound in (\ref{trivialineqL}) implies that $ L_{\Gamma}(s, \rho) $ has no zeros in $ \R s > \delta $. In Section \ref{Thermodynamic Formalism Approach for L-Functions} we will see that $ L_{\Gamma}(s,\rho) $ admits an analytic continuation to $ s\in \CC $ when $ \Gamma $ is a Schottky group.

The L-functions we are considering here should be regarded as analogues of Artin L-functions from algebraic number theory. Given an arbitrary number field $ k $ and a relative Galois extension $ K $, there is a formula relating the zeta function of $ k $ with that of $ K $ via the Artin L-Functions, proven for abelian Galois groups by Takagi \cite{Takagi} and for general Galois groups by Artin \cite{Artin}. 

For a normal subgroup $ \Gamma'\leqslant \Gamma $ of finite index, we may view the corresponding   quotient $ X'=\Gamma'\setminus \HH $ as a ``Galois'' covering of $ X=\Gamma\setminus\HH $ of finite degree. Let $ \widehat{\Gamma/\Gamma'} $ denote the set of all finite-dimensional irreducible unitary representations of $ \Gamma/\Gamma' $. Obviously, every representation of $ \Gamma/\Gamma' $ can be regarded as a representation of $ \Gamma $ whose kernel contains $ \Gamma' $. Thus it is possible to define $ L_{\Gamma}(s, \rho) $ for every $ \rho\in \widehat{\Gamma/\Gamma'}. $ Note that for the trivial one-dimensional representation we have  $ L_{\Gamma}(s,1) = Z_{\Gamma}(s). $

Using spectral methods, Venkov \cite{Venkov} showed that there is an analogue of the Artin-Takagi formula for the Selberg zeta function $ Z_{\Gamma}(s) $ associated to lattices $ \Gamma\leqslant \mathrm{PSL}_{2}(\RR) $. We shall now give a proof for arbitrary finitely generated Fuchsian groups.

\begin{prop}[Analogue of the Artin-Takagi Formula]\label{Analogue of the Artin-Takagi Formula} Let $ \Gamma $ be a finitely generated discrete group of $ \mathrm{PSL}_{2}(\mathbb{R}) $ and let $ \Gamma' $ be a normal subgroup of $ \Gamma $ of finite index. Then for $ \R s > \delta $ we have 
$$ Z_{\Gamma'}(s) = \prod_{\rho\in \widehat{\Gamma/\Gamma'}} L_{\Gamma}(s,\rho)^{\dim \rho}. $$
\end{prop}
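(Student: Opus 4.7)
The plan is to take logarithms of both sides on the region $\R s > \delta$ of absolute convergence, expand each side as a triple series indexed by primitive conjugacy classes, and match the two expansions via character orthogonality on $G := \Gamma/\Gamma'$. Using the identity $\log\det(1-A) = -\sum_{n\geq 1} \tr(A^{n})/n$, I would write
$$\log L_{\Gamma}(s,\rho) = -\sum_{\overline{\gamma} \in \overline{\Gamma}_{p}} \sum_{k=0}^{\infty} \sum_{n=1}^{\infty} \frac{\chi_{\rho}(\gamma^{n})}{n}\, e^{-n(s+k)\ell(\gamma)},$$
where $\chi_{\rho} = \tr\rho$. Multiplying by $\dim\rho$ and summing over $\rho \in \widehat{\Gamma/\Gamma'}$ brings in the character $\chi_{\mathrm{reg}}(g) = \sum_{\rho}(\dim\rho)\,\chi_{\rho}(g)$ of the regular representation of $G$, which equals $|G|$ when $g$ is trivial in $G$ and vanishes otherwise. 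Writing $m(\gamma)$ for the order of $\gamma\Gamma'$ in $G$, only indices $n = m(\gamma)\ell'$ with $\ell'\geq 1$ survive, yielding
$$\sum_{\rho}(\dim \rho)\log L_{\Gamma}(s,\rho) = -|G| \sum_{\overline{\gamma} \in \overline{\Gamma}_{p}} \sum_{k=0}^{\infty} \sum_{\ell'=1}^{\infty} \frac{1}{m(\gamma)\ell'}\, e^{-\ell' m(\gamma)(s+k)\ell(\gamma)}.$$

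For the left-hand side I would carry out the analogous expansion
$$\log Z_{\Gamma'}(s) = -\sum_{\overline{\gamma'} \in \overline{\Gamma'}_{p}} \sum_{k=0}^{\infty} \sum_{n=1}^{\infty} \frac{1}{n}\, e^{-n(s+k)\ell(\gamma')}$$
and reparameterize primitive $\Gamma'$-conjugacy classes by primitive $\Gamma$-conjugacy classes via the map $\overline{\gamma'} \mapsto \overline{\gamma}$, where $\gamma \in \Gamma$ is the unique primitive root of $\gamma'$ inside $\Gamma$. The combinatorial input I will need is this: for each primitive $\gamma \in \Gamma$, the element $\gamma^{m(\gamma)}$ is primitive in $\Gamma'$ with displacement length $m(\gamma)\ell(\gamma)$, and its $\Gamma$-conjugacy class splits into exactly $|G|/m(\gamma)$ distinct $\Gamma'$-conjugacy classes. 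Granted this, the $\Gamma'$-sum reproduces the right-hand side term-by-term and exponentiating gives the desired product formula; absolute convergence on $\R s > \delta$ (cf.\ \eqref{trivialineqL}) legitimizes every rearrangement.

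The crux, and the main obstacle, is verifying this combinatorial claim. Since $\Gamma$ is torsion-free, the centralizer in $\Gamma$ of any primitive hyperbolic element $\gamma$ equals the cyclic subgroup $\langle\gamma\rangle$ (two hyperbolic isometries commute iff they share fixed points, and the discrete subgroup of the stabilizer of those fixed points is cyclic). Primitivity of $\gamma^{m(\gamma)}$ in $\Gamma'$ then follows: any factorization $\gamma^{m(\gamma)} = \alpha^{k}$ with $\alpha \in \Gamma'$ forces $\alpha \in \langle\gamma\rangle$ by the centralizer property, so $\alpha = \gamma^{j}$ with $jk = m(\gamma)$, and $\gamma^{j}\in\Gamma'$ forces $m(\gamma)\mid j$, hence $k = 1$. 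Counting $\Gamma'$-conjugacy classes inside the $\Gamma$-class of $\gamma^{m(\gamma)}$ reduces by orbit-stabilizer to $|\Gamma'\backslash\Gamma/\langle\gamma\rangle|$, equivalently to the number of $\langle\gamma\rangle$-orbits on $G$ under translation; since $\bar\gamma$ has order $m(\gamma)$ and $\Gamma'$ is normal, every such orbit has size $m(\gamma)$, giving exactly $|G|/m(\gamma)$ orbits, as required.
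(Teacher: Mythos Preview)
Your argument is correct and follows the same overall strategy as the paper: expand the logarithm (the paper passes to the logarithmic derivative instead), apply the regular-representation identity $\sum_{\rho}(\dim\rho)\chi_{\rho}=|G|\cdot\mathbbm{1}_{\Gamma'}$, and then match against the $\Gamma'$-expansion via the splitting of $\Gamma$-conjugacy classes into $\Gamma'$-conjugacy classes.

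One point worth noting: your splitting count $|G|/m(\gamma)$ is the correct one, and your orbit--stabilizer justification is clean. The paper asserts that the $\Gamma$-class of $\gamma^{n(\gamma)}$ splits into exactly $|G|$ distinct $\Gamma'$-classes, which is only true when $n(\gamma)=1$; in general the list $\{g_{j}\gamma^{n(\gamma)}g_{j}^{-1}\}_{j=1}^{|G|}$ hits each $\Gamma'$-class $n(\gamma)$ times. In the paper's logarithmic-derivative computation this overcount is silently compensated by the factor $\ell(\gamma')=n(\gamma)\ell(\gamma)$ that appears in front, so the final identity is right; in your direct logarithm computation no such factor is present, and the honest count $|G|/m(\gamma)$ is exactly what is needed. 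Working with $\log$ rather than $(\log)'$ also spares you the final step of identifying the integration constant.
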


\begin{proof}
Set $ G = \Gamma/\Gamma' $. For $ \R s > \delta $, taking the logarithm of the product definition (\ref{prod}) gives 
\begin{align*}
\log L_{\Gamma}(s,\rho) &= \sum_{\overline{\gamma}\in \overline{\Gamma}_{p}} \sum_{k=0}^{\infty} \log \det\left( 1-\rho(\gamma)e^{-(s+k)\ell(\gamma)}\right)\\
 &= \sum_{\overline{\gamma}\in \overline{\Gamma}_{p}} \sum_{k=0}^{\infty} \tr \log \left( 1-\rho(\gamma)e^{-(s+k)\ell(\gamma)}\right).
\end{align*}
Expanding the logarithm, we obtain
$$ \log L_{\Gamma}(s,\rho) = -\sum_{\overline{\gamma}\in \overline{\Gamma}_{p}} \sum_{k=0}^{\infty} \sum_{m=1}^{\infty} \frac{1}{m}\chi_{\rho}(\gamma^{m})e^{-(s+k)m\ell(\gamma)}, $$
where $ \chi_{\rho}(\gamma) := \tr\, \rho(\gamma) $ denotes the character associated to the representation $ \rho\in \widehat{G} $ (extended to $ \Gamma $ in the obvious way). 
Thus for $ \R s > \delta $ the logarithmic derivative of $ L_{\Gamma}(s,\rho) $ can be expressed as
\begin{align*}
\frac{L_{\Gamma}'(s,\rho)}{L_{\Gamma}(s,\rho)} &= - \frac{d}{ds} \sum_{\overline{\gamma}\in \overline{\Gamma}_{p}} \sum_{k=0}^{\infty} \sum_{m=1}^{\infty} \frac{1}{m}\chi_{\rho}(\gamma^{m})e^{-(s+k)m\ell(\gamma)}\\
&= \sum_{\overline{\gamma}\in \overline{\Gamma}_{p}} \sum_{k=0}^{\infty}  \sum_{m=1}^{\infty} \ell(\gamma)\chi_{\rho}(\gamma^{m})e^{-(s+k)m\ell(\gamma)}.
\end{align*}
Using the geometric series expansion we can rewrite this as
\begin{equation}\label{logder}
\frac{L_{\Gamma}'(s,\rho)}{L_{\Gamma}(s,\rho)}=\sum_{\overline{\gamma}\in \overline{\Gamma}_{p}}\sum_{m=1}^{\infty} \ell(\gamma) \chi_{\rho}(\gamma^{m}) \frac{e^{-sm\ell(\gamma)}}{1-e^{-m\ell(\gamma)}}.
\end{equation}
Let $ e $ be the identity in $ G $. From the representation theory of finite groups there is the well-known identity
$$ \sum_{\rho\in \widehat{G}} \dim \rho\cdot \chi_{\rho}(g) = \vert G\vert \cdot \mathbbm{1}_{e}(g),
$$
where $ \mathbbm{1}_{e} $ is the indicator function of the set $ \{ e\} $. This identity can be extended for all $ \gamma\in \Gamma $ as follows:
\begin{equation}\label{Identity}
\sum_{\rho\in \widehat{G}} \dim \rho\cdot \chi_{\rho}(\gamma) = \vert G\vert \cdot \mathbbm{1}_{\Gamma'}(\gamma).
\end{equation}
Here $ \mathbbm{1}_{\Gamma'} $ is the indicator function of $ \Gamma' $. Let $ \gamma\in \Gamma $ be a non-trivial element and let $ n(\gamma) $ denote the smallest positive integer $ n $ such that $ \gamma^{n}\in \Gamma' $. If there is no such integer set $ n(\gamma) = \infty. $ Let $ \gamma\in \Gamma $ with $ n(\gamma)<\infty $. Then $ \gamma^{n(\gamma)} $ is a $ \Gamma' $-primitive element of $ \Gamma' $. 

Choose representatives of $ G $ in $ \Gamma $. More precisely, set $ n = \vert G\vert $ and fix elements $ g_{1},\dots, g_{n}\in \Gamma $ such that $ \pi(g_{i})\neq \pi(g_{j}) $ for all $ i\neq j, $ where $ \pi:\Gamma\to \Gamma/\Gamma' $ is the natural projection.

Observe that the conjugacy class $ [\gamma^{n(\gamma)}] $ in $ \Gamma $ splits into exactly $ n $ conjugacy classes in $ \Gamma' $. Indeed, the elements $ g_{j}\gamma^{n(\gamma)}g_{j}^{-1}$, $ j=1,\dots, n, $ are mutually conjugate in $ \Gamma $, but they all belong to different $ \Gamma' $-conjugacy classes.

Let $ \{ \gamma_{k} : k\in \mathbb{N}\} $ be a complete list of representatives of $ \Gamma $-conjugacy classes of non-trivial $ \Gamma $-primitive hyperbolic elements with $ n(\gamma) < \infty $. Then
\begin{equation}\label{list}
\bigcup_{j=1}^{n} \{ g_{j}\gamma_{k}^{n(\gamma_{k})}g_{j}^{-1} : k\in \mathbb{N}\}
\end{equation}
provides a complete list of representatives of non-trivial $ \Gamma' $-conjugacy classes of $ \Gamma' $-primitive hyperbolic elements.
Notice that $ \ell(g\gamma g^{-1}) = \ell(\gamma) $ for all $ g,\gamma\in \Gamma $. Therefore summing up (\ref{logder}) over all $ \rho\in \widehat{G} $ and using identity (\ref{Identity}) yields
\begin{align*}
\sum_{\rho\in \widehat{G}} \dim \rho \cdot \frac{L_{\Gamma}'(s,\rho)}{L_{\Gamma}(s,\rho)} &= \vert G\vert \sum_{k=1}^{\infty}\sum_{m=1}^{\infty} \ell(\gamma_{k}) \mathbbm{1}_{\Gamma'}(\gamma_{k}^{m}) \frac{e^{-sm\ell(\gamma_{k})}}{1-e^{-m\ell(\gamma_{k})}}\\ 
 &= \sum_{j=1}^{n} \sum_{k=1}^{\infty}\sum_{m=1}^{\infty} \ell(g_{j}\gamma_{k}g_{j}^{-1}) \mathbbm{1}_{\Gamma'}(g_{j}\gamma_{k}g_{j}^{-1}) \frac{e^{-sm\ell(g_{j}\gamma_{k}g_{j}^{-1})}}{1-e^{-m\ell(g_{j}\gamma_{k}g_{j}^{-1})}}\\
 &= \sum_{j=1}^{n} \sum_{k=1}^{\infty}\sum_{m=1}^{\infty} \ell(g_{j}\gamma_{k}^{n(\gamma_{k})}g_{j}^{-1})  \frac{e^{-sm\ell(g_{j}\gamma_{k}^{n(\gamma_{k})}g_{j}^{-1})}}{1-e^{-m\ell(g_{j}\gamma_{k}^{n(\gamma_{k})}g_{j}^{-1})}} 
\end{align*}
To complete the proof we need to show that the elements in the list (\ref{list}) only appear once. Suppose  $ g_{j}\gamma_{k}^{n(\gamma_{k})}g_{j}^{-1} = g_{i}\gamma_{l}^{n(\gamma_{l})}g_{i}^{-1} $ for some indices $ k,l\in \mathbb{N} $ and $ 1\leq i,j\leq n $. Consequently $ (g_{j}\gamma_{k}g_{j}^{-1})^{n(\gamma_{k})} = (g_{i}\gamma_{l}g_{i}^{-1})^{n(\gamma_{l})} $. Since both $ g_{j}\gamma_{k}g_{j}^{-1} $ and $ g_{i}\gamma_{l}g_{i}^{-1} $ are $ \Gamma $-primitive elements, we deduce that $ n(\gamma_{k}) = n(\gamma_{l}) $ and $ g_{j}\gamma_{k}g_{j}^{-1} = g_{i}\gamma_{l}g_{i}^{-1} $. Hence $ \gamma_{k} $ and $ \gamma_{l} $ are primitive elements conjugate to each other in $ \Gamma $. This forces $ k=l $ and $ i=j $. We conclude that
$$ \sum_{\rho\in \widehat{G}} \dim \rho \cdot \frac{L_{\Gamma}'(s,\rho)}{L_{\Gamma}(s,\rho)} =  \sum_{\overline{\gamma}\in \overline{\Gamma}_{p}}\sum_{m=1}^{\infty} \ell(\gamma)  \frac{e^{-sm\ell(\gamma)}}{1-e^{-m\ell(\gamma)}} = \frac{Z_{\Gamma'}'(s)}{Z_{\Gamma'}(s)}. $$
Since $ Z_{\Gamma'}(s) $ and $  \prod_{\rho\in \widehat{G}} L_{\Gamma}(s,\rho)^{\dim \rho} $ are analytic functions on $ \R s > \delta $ and their logarithmic derivatives coincide, it follows that
$$ Z_{\Gamma'}(s) = C\prod_{\rho\in \widehat{G}} L_{\Gamma}(s,\rho)^{\dim \rho} $$
for some constant $ C $. To see that $ C=1 $, notice that  $ Z_{\Gamma'}(s) $ and $ L_{\Gamma}(s, \rho) $ tend to $ 1 $ as $ \R s\to +\infty $ by the product expressions (\ref{prodzeta}) and (\ref{prod}).
\end{proof}

\section{Thermodynamic Formalism for L-Functions} \label{Thermodynamic Formalism Approach for L-Functions}

\subsection{Fredholm Determinant Identity}

The ``thermodynamic formalism'' for zeta functions introduced by Ruelle \cite{Ruelle} allows us to express the Selberg zeta function as the Fredholm determinant of a certain transfer operator. It turns out that the L-functions introduced in Section \ref{L-Functions and Artin-Takagi Formula} may be expressed as Fredholm determinants of ``twisted'' transfer operators. The thermodynamic formalism approach for L-functions is crucial to our proof of Theorem \ref{main theorem}. We begin this section by recalling the definition of a Fuchsian Schottky group.

Fix an integer $ r\geq 1 $. Let $ D_{1}, \dots, D_{2r} $ be open euclidean disks in $ \CC $ with pairwise disjoint closures and centers on the real axis. The disks can be arranged in any order on the real axis. For each $ j\in \lbrace 1, \dots, 2r\rbrace $ let $ S_{j}\in \PSL_{2}(\RR)  $ be the isometry which sends $ \partial D_{j} $ to $ \partial D_{j+r} $ and maps the exterior of $ D_{j} $ to the interior of $ D_{j+r} $. We assume that the indices are defined cyclically, so that $ S_{j+2r} = S_{j} $ and $ S_{j}^{-1} = S_{j+r} $.

\begin{figure}[h]
\centering
\includegraphics[scale=0.35]{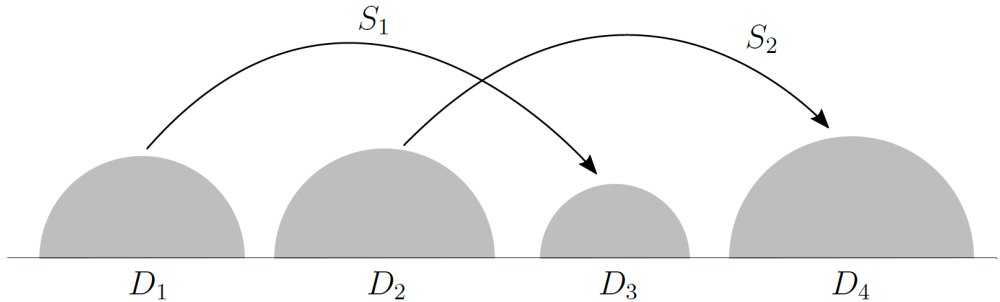}
\caption{Schottky disks and generators for $ r=2 $}
\end{figure}

A Fuchsian group $ \Gamma $ is a Schottky group if there exists a set of disks $ \{ D_{j} \} $ as above such that $ \Gamma $ is generated by the corresponding $ S_{j} $'s. (More precisely, this is a classical Fuchsian Schottky group, see Button \cite{Button}.) 

We now define the function space on which the associated transfer operator acts. Let $ \rho : \Gamma \to \mathrm{U}(V) $ be a finite-dimensional unitary representation on a complex Hilbert space $ V $ with inner product $ \langle\cdot, \cdot\rangle_{V} $.
Set
$$ U = \bigcup_{j=1}^{2r} D_{j}. $$
Consider the Hilbert space $ \mathcal{H}(U, V) $ of vector-valued holomorphic functions $ f : U \to V $ with
$$ \Vert f\Vert^{2}_{\mathcal{H}(U, \rho)} := \int_{U} \Vert f(z)\Vert_{V}^{2} dm(z) < \infty,  $$ 
where $ m $ is the Lebesgue measure and $ \Vert \cdot\Vert_{V} $ is the norm on $ V $ derived from the inner product $ \langle \cdot, \cdot\rangle_{V} $. On this space we can now define the ``$\rho$-twisted'' transfer operator $ \mathcal{L}_{s,\rho} $ by
$$ \mathcal{L}_{s,\rho}(f)(z) := \sum_{i\neq j+r} \left[  (S_{i}^{-1})'(z) \right]^{s} \rho(S_{i})f(S_{i}^{-1}z) \text{ if } z\in D_{j}. $$
Note that $ (S_{i}^{-1})'(z) $ is positive for $ z\in \mathbb{R} $, so the power $ \left[  (S_{i}^{-1})'(z) \right]^{s} $ is well defined for $ z\in D_{j} $ by analytic continuation. We also point out that for all $ i\neq j+r $ the map $ S_{i}^{-1} : D_{j}\to D_{i} $ is a holomorphic contraction, since $ \overline{S_{i}^{-1}(D_{j})} \subset D_{i}. $ Consequently, $ \mathcal{L}_{s,\rho} $ is a compact \textit{trace class} operator and therefore $ \mathcal{L}_{s,\rho} $ has a \textit{Fredholm determinant}. For a basic reference for the theory of Fredholm determinants on Hilbert spaces, see \cite{Lidskii}.

The connection between vector-valued transfer operators and L-functions is given by the following result.

\begin{prop}\label{FredholIdentity} For all $ \R s \gg 0 $, we have the identity
$$ L_{\Gamma}(s, \rho) = \det(1-\mathcal{L}_{s,\rho}). $$
In particular, $ L_{\Gamma}(s,\rho) $ extends to an entire function.
\end{prop}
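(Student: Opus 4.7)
The plan is to compare logarithmic expansions of both sides for $\R s$ large enough that $\Vert \mathcal{L}_{s,\rho}\Vert < 1$, and then invoke analytic continuation. On the Fredholm determinant side the classical identity gives
$$-\log\det(1-\mathcal{L}_{s,\rho}) = \sum_{n=1}^{\infty}\frac{1}{n}\tr(\mathcal{L}_{s,\rho}^n),$$
whereas the power-series manipulation already carried out in the proof of Proposition \ref{Analogue of the Artin-Takagi Formula} supplies
$$-\log L_{\Gamma}(s,\rho) = \sum_{\overline{\gamma}\in\overline{\Gamma}_p}\sum_{m=1}^{\infty}\sum_{k=0}^{\infty}\frac{1}{m}\chi_{\rho}(\gamma^m)e^{-(s+k)m\ell(\gamma)}.$$
The task therefore reduces to computing $\tr(\mathcal{L}_{s,\rho}^n)$ and matching.

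Iterating the definition of $\mathcal{L}_{s,\rho}$ yields, for $z\in D_{i_1}$,
$$(\mathcal{L}_{s,\rho}^n f)(z) = \sum_{\alpha} [(g_{\alpha})'(z)]^{s}\rho(S_{i_1}\cdots S_{i_n})f(g_{\alpha}(z)),$$
where $\alpha = (i_1,\ldots,i_n)$ ranges over admissible sequences (meaning $i_{j+1}\neq i_j+r$) and $g_{\alpha} = S_{i_n}^{-1}\cdots S_{i_1}^{-1}$. Because $\mathcal{H}(U,V) = \bigoplus_{j}\mathcal{H}(D_j,V)$, only the closed cyclically admissible words (those with $i_n = i_1$ and $i_1\neq i_n + r$) contribute to the trace. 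For each such $\alpha$, the map $g_{\alpha}$ is a strict holomorphic self-contraction of $D_{i_1}$ whose unique fixed point $z_{\alpha}$ is the attracting fixed point of $\gamma_{\alpha}:=S_{i_1}\cdots S_{i_n}\in\Gamma$, with $g_{\alpha}'(z_{\alpha}) = e^{-\ell(\gamma_{\alpha})}$. Applying the Atiyah--Bott--type trace formula for a weighted composition operator $f\mapsto \phi\cdot(f\circ g)$ on a vector-valued Bergman space, whose trace equals $\tr_{V}\phi(z_g)/(1-g'(z_g))$, I obtain
$$\tr(\mathcal{L}_{s,\rho}^n) = \sum_{\alpha\text{ closed}} \frac{\chi_{\rho}(\gamma_{\alpha})e^{-s\ell(\gamma_{\alpha})}}{1-e^{-\ell(\gamma_{\alpha})}}.$$

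Next is the combinatorial bookkeeping. Closed cyclically admissible words of length $n$ modulo cyclic rotation correspond bijectively to pairs $(\overline{\gamma}_0, m)$ with $\overline{\gamma}_0\in\overline{\Gamma}_p$ primitive of word-length $n_0$ and $m\geq 1$ satisfying $mn_0 = n$; an orbit--stabilizer count shows that the cyclic orbit of the word representing $\gamma_0^m$ has exactly $n_0$ elements. The factor $n_0$ cancels $1/n = 1/(mn_0)$, so that
$$\sum_{n=1}^{\infty}\frac{1}{n}\tr(\mathcal{L}_{s,\rho}^n) = \sum_{\overline{\gamma}_0}\sum_{m=1}^{\infty}\frac{1}{m}\frac{\chi_{\rho}(\gamma_0^m)e^{-sm\ell(\gamma_0)}}{1-e^{-m\ell(\gamma_0)}},$$
and expanding $(1-e^{-m\ell})^{-1} = \sum_{k\geq 0}e^{-km\ell}$ recovers exactly the expansion of $-\log L_{\Gamma}(s,\rho)$ displayed above. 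Exponentiating gives the identity for $\R s \gg 0$. For the entirety claim, $s\mapsto \mathcal{L}_{s,\rho}$ is an entire family of trace-class operators (the weight $[(S_i^{-1})'(z)]^s$ is entire in $s$, and strict contractivity of the inverse branches gives uniform trace-norm bounds on compact $s$-sets), so $\det(1-\mathcal{L}_{s,\rho})$ is entire in $s$ and furnishes the claimed analytic continuation of $L_{\Gamma}(s,\rho)$.

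The main obstacle I expect is the careful justification of the Atiyah--Bott trace formula in the present vector-valued Bergman setting and, relatedly, the interchange of trace with the summation over admissible words $\alpha$; both depend essentially on the nuclearity of $\mathcal{L}_{s,\rho}$ coming from strict contractivity of the inverse branches $S_i^{-1}$. The cyclic-word-to-primitive-conjugacy-class bijection is by contrast standard for Schottky groups, since $\Gamma$ is free on $S_1,\ldots,S_r$ and each nontrivial element admits a unique reduced-word representation up to cyclic rotation.
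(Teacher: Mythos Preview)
The paper does not supply its own proof of this proposition: immediately after the statement it simply remarks that ``A proof of Proposition \ref{FredholIdentity} can be found in [PF] and in [JN]'' (Fedosova--Pohl and Jakobson--Naud). Your proposal is precisely the standard argument that those references carry out, namely comparing $\sum_{n\geq 1}\tfrac{1}{n}\tr(\mathcal{L}_{s,\rho}^n)$ with the logarithmic expansion of the Euler product via the holomorphic Lefschetz/Ruelle trace formula for contracting composition operators, together with the bijection between cyclically reduced words and conjugacy classes in the free group. So your approach is correct and is exactly the route taken in the sources the paper defers to.

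One small indexing slip to clean up: the diagonal contribution to the trace does \emph{not} come from words with ``$i_n=i_1$''. With your convention $z\in D_{j_0}$, the composite $g_\alpha=S_{i_n}^{-1}\cdots S_{i_1}^{-1}$ lands in $D_{i_n}$, so the $(j_0,j_0)$ block picks out the words with $i_n=j_0$ and $i_1\neq j_0+r$; summing over $j_0$ then yields the sum over all \emph{cyclically reduced} words (those with $i_{j+1}\neq i_j+r$ for $j=1,\dots,n-1$ and additionally $i_1\neq i_n+r$). With that correction the rest of your bookkeeping and the count of cyclic rotations go through as you wrote them.
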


There is a long history of Fredholm determinant representations for the Selberg zeta functions that goes back to Ruelle  \cite{Ruelle}, Fried \cite{Fried}, Pollicott \cite{Pollicott} and Mayer \cite{Mayer}, Chang-Mayer \cite{ChangMayer} for the modular surface. Later M\"oller-Pohl \cite{Pohl2013} and Pohl \cite{Pohl2015}, \cite{Pohl2016} extended the thermodynamic formalism approach to a large class of hyperbolic orbifolds with cusps, which for instance includes all Hecke triangle orbifolds. 

The L-functions we introduced in Section \ref{L-Functions and Artin-Takagi Formula} are often referred to as ``twisted Selberg functions''. Fredholm determinant representations of twisted Selberg zeta functions recently appeared in the work of Fedosova and Pohl \cite{PF}. Fredholm determinant representations of L-Functions associated to congruence Schottky groups recently appeared in the work of Jakobson and Naud \cite{JN}. A proof of Proposition \ref{FredholIdentity} can be found in \cite{PF} and in \cite{JN}.

\subsection{Proof of Theorem 2}

Let $ \Gamma $ be a Schottky group and set $ X = \Gamma\setminus \HH  $. Schottky groups are free (see for instance \cite[Lemma~15.2]{Borthwick}), so that we may assume that $ \Gamma $ is isomorphic to a free group of rank $ r $ for some positive integer $ r $. Then the homology group $ H_{1}(X, \ZZ) = \Gamma/[\Gamma, \Gamma] $ is isomorphic to $ \ZZ^{r} $ and its torsion group is trivial. Let $ [\gamma] $ denote the image of $ \gamma\in \Gamma $ under the natural projection $ \Gamma\to H_{1}(X, \ZZ).$ Recall that by fixing an isomorphism $ H_{1}(X, \ZZ) \cong \ZZ^{r} $, we may regard $ [\gamma] $ as a vector in $ \ZZ^{r} $. For a vector $ \theta\in \mathbb{R}^{r} $ let $ \chi_{\theta} :\Gamma\to S^{1} $ denote the character given by
$$ \chi_{\theta}(\gamma) := e(\langle \theta,[\gamma] \rangle), $$ 
where $ e(t) := e^{2\pi i t} $ and $ \langle \theta, v \rangle := \theta_{1}v_{1} + \cdots + \theta_{1}v_{1} $ denotes the standard inner product on $ \mathbb{R}^{r}. $

Set $ G_{N}:= \Gamma/\Gamma_{N}$ and recall that $ G_{N} $ is isomorphic to the abelian group $ (\ZZ/N\ZZ)^{r} $. We can identify $ (\ZZ/N\ZZ)^{r} $ with $ \{ 0, \dots, N-1\}^{r}\subset \RR^{r} $ as sets. Thus the set $ \widehat{G}_{N} $ of irreducible finite-dimensional representations of $ G_{N} $ is given by the set of characters $ \chi_{\frac{1}{N}a} $ with $ a\in \{ 0, \dots, N-1\}^{r}. $ 

For notational convenience, we will write $ L(s,\theta) := L(s, \chi_{\theta}) $ and $ \mathcal{L}_{s, \theta} := \mathcal{L}_{s, \chi_{\theta}}. $ By Proposition \ref{Analogue of the Artin-Takagi Formula} we have the factorization formula
\begin{equation}\label{ArtTak}
 Z_{\Gamma_{N}}(s) = \prod_{a\in \{ 0, \dots, N-1\}^{r}} L_{\Gamma}\left( s, \frac{1}{N}a\right).
\end{equation}

Let $ \Vert \theta\Vert_{1} = \vert \theta_{1}\vert + \cdots + \vert \theta_{r}\vert $ denote the 1-norm on $ \RR^{r}. $

\begin{lem}\label{vanishing property}
There exists $ \varepsilon_{1}>0 $ such that the following statement holds.
For all $ 0 <  \varepsilon < \varepsilon_{1} $ there exists a constant $ c_{1}>0 $ such that for all $ \theta\in \RR^{r} $, $ \Vert \theta\Vert_{1} < c_{1} $ implies that $ L_{\Gamma}(s,\theta) $ vanishes at some point $ s $ with $ \vert s-\delta\vert < \varepsilon. $
\end{lem}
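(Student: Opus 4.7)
My plan is to deduce the lemma from the Fredholm determinant representation $L_{\Gamma}(s,\theta) = \det(1 - \mathcal{L}_{s,\theta})$ of Proposition \ref{FredholIdentity} by a Rouché-type perturbation argument centred at the simple zero of $Z_{\Gamma}$ at $s = \delta$. The base case $\theta = 0$ is favourable: $\chi_{0} \equiv 1$ is trivial, $\mathcal{L}_{s,0}$ is the untwisted Schottky transfer operator, and $L_{\Gamma}(s,0) = Z_{\Gamma}(s)$. By the Borthwick-Judge-Perry results recalled in Section~1, $Z_{\Gamma}$ has a simple zero at $s = \delta$ and no other zero on the line $\R s = \delta$. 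Hence I may pick $\varepsilon_{1} > 0$ such that $Z_{\Gamma}$ has no zero in $\{0 < |s - \delta| \leq \varepsilon_{1}\}$, and for any $\varepsilon \in (0,\varepsilon_{1})$ the quantity $m(\varepsilon) := \min_{|s - \delta| = \varepsilon} |Z_{\Gamma}(s)|$ is strictly positive.

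The heart of the proof will be a trace-norm estimate controlling the dependence of $\mathcal{L}_{s,\theta}$ on $\theta$. Since $\chi_{\theta}$ is one-dimensional, $\mathcal{L}_{s,\theta}$ acts on the scalar Bergman space $\mathcal{H}(U,\mathbb{C})$, and the parameter $\theta$ enters only through the $2r$ scalars $\chi_{\theta}(S_{1}), \dots, \chi_{\theta}(S_{2r})$. Concretely,
$$\mathcal{L}_{s,\theta} - \mathcal{L}_{s,0} = \sum_{i = 1}^{2r} \bigl(\chi_{\theta}(S_{i}) - 1\bigr)\, T_{i,s},$$
where $T_{i,s}$ is the fixed trace class operator defined by $T_{i,s} f(z) = [(S_{i}^{-1})'(z)]^{s} f(S_{i}^{-1} z)$ on the disk $D_{j}$ whenever $i \neq j+r$ (and zero otherwise). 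Combining $|\chi_{\theta}(S_{i}) - 1| = |e(\langle \theta, [S_{i}]\rangle) - 1| \leq 2\pi \|\theta\|_{1}\max_{i}\|[S_{i}]\|_{\infty}$ with the trace norm bounds on each $T_{i,s}$ that underlie the Fredholm determinant identity itself, I obtain $\|\mathcal{L}_{s,\theta} - \mathcal{L}_{s,0}\|_{\mathrm{tr}} \ll \|\theta\|_{1}$ uniformly for $s$ in any compact subset of $\mathbb{C}$.

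From here the conclusion is immediate. The Fredholm determinant is Lipschitz in the trace norm (see for instance \cite{Lidskii}), so there exists a finite constant $C(\varepsilon)$ with
$$\sup_{|s-\delta| = \varepsilon} \bigl|L_{\Gamma}(s,\theta) - Z_{\Gamma}(s)\bigr| \leq C(\varepsilon)\,\|\theta\|_{1}.$$
Choosing $c_{1} = c_{1}(\varepsilon) > 0$ small enough that $C(\varepsilon)\, c_{1} < m(\varepsilon)$, Rouché's theorem then guarantees that $L_{\Gamma}(\cdot,\theta)$ has exactly as many zeros (counted with multiplicity) in the open disk $|s-\delta| < \varepsilon$ as $Z_{\Gamma}$, namely one.

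The only substantive obstacle is the uniform trace-norm bound on the operators $T_{i,s}$ for $s$ in the compact disk $\{|s-\delta| \leq \varepsilon_{1}\}$. I expect this to be extractable from the same holomorphic contraction estimates used in \cite{PF} and \cite{JN} to establish Proposition \ref{FredholIdentity}, since each $T_{i,s}$ is one of the pieces whose sum is precisely $\mathcal{L}_{s,0}$; no new analytic input should be required beyond what is already in play for the Fredholm determinant identity.
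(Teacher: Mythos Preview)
Your argument is correct and complete in outline; the trace-norm estimate you flag as the ``only substantive obstacle'' is indeed routine, since each $T_{i,s}$ is a weighted composition operator with a strictly contracting holomorphic symbol $S_{i}^{-1}:D_{j}\to D_{i}$, and the singular values of such operators decay geometrically with constants depending continuously on $s$.

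The paper, however, takes a different route. Rather than Rouch\'e, it observes that $(s,\theta)\mapsto L_{\Gamma}(s,\theta)=\det(1-\mathcal{L}_{s,\theta})$ is continuously differentiable (as a function into $\CC$, via trace-class differentiability of the operator family), notes that $\partial_{s}L_{\Gamma}(\delta,0)=Z_{\Gamma}'(\delta)\neq 0$ because $\delta$ is a simple zero, and then invokes the Implicit Function Theorem to produce a continuous branch $\theta\mapsto s(\theta)$ with $s(0)=\delta$ and $L_{\Gamma}(s(\theta),\theta)=0$. Continuity of this branch immediately gives the lemma.

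The two arguments use essentially the same analytic input (trace-norm continuity of $\mathcal{L}_{s,\theta}$ in $\theta$) but package it differently. Your Rouch\'e approach needs only continuity, not differentiability, and yields the bonus that the zero in $\vert s-\delta\vert<\varepsilon$ is \emph{unique} for $\Vert\theta\Vert_{1}$ small---information the paper does not extract. The paper's IFT approach, on the other hand, directly produces the zero as a continuous (indeed $C^{1}$) function of $\theta$, which is conceptually cleaner if one later wants to track how the resonance moves. For the purposes of Theorem~\ref{main theorem} either suffices.
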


\begin{proof}
Note that the map $ (s,\theta)\mapsto \mathcal{L}_{s, \theta} $ defines a continuously differentible map from $ \CC\times\RR^{r} $ into the trace class operators on the Hilbert space $ \mathcal{H}(U) $ of holomorphic $ L^{2} $-functions $ f:U\to \CC $. Hence, $ (s,\theta)\mapsto L_{\Gamma}(s,\theta)=\det(1-\mathcal{L}_{s, \theta}) $ is a continuously differentiable map. Since $ s = \delta $ is a simple zero of $ Z_{\Gamma}(s) $, 
$$ \frac{\partial}{\partial s} L_{\Gamma}(\delta, 0) = Z_{\Gamma}'(\delta) \neq 0. $$
By the (analytic) Inverse Function Theorem there exists a non-empty neighbourhood $ B_{\varepsilon_{1}}(0)=\{\theta\in \RR^{r} : \Vert \theta\Vert_{1} < \varepsilon_{1} \} $ of $ 0 $ and a continuous function $ s : B_{\varepsilon_{1}}(0)\to \CC $ with $ s(0) = \delta $ and $ L_{\Gamma}(s(\theta), \theta) = 0. $ Continuity of $ s :  B_{\varepsilon_{1}}(0)\to \CC $ now implies the statement of Lemma \ref{vanishing property}. 
\end{proof}

We can now prove Theorem \ref{main theorem}.

\begin{proof}[Proof of Theorem \ref{main theorem}]

Let $ \varepsilon_{1} $ be as in Lemma \ref{vanishing property} and define $ \varepsilon_{0}:= \min\{ \varepsilon_{1}, \frac{\delta}{2} \} $. Fix $ \varepsilon\in (0, \varepsilon_{0}) $. By Lemma \ref{vanishing property} there exists a constant $ c_{1} > 0 $ such that for all $ \theta \in \mathbb{R}^{r} $, $ \Vert \theta\Vert_{1} < c_{1} $ implies $ L_{\Gamma}(\zeta,\theta) = 0 $ for some $ \zeta\in \CC $ with $ \vert \zeta-\delta\vert < \varepsilon $. Hence, by (\ref{ArtTak}) we have
\begin{equation}\label{counting}
\# \{ \zeta\in \CC : Z_{\Gamma_{N}}(\zeta)=0,\quad \vert \delta-\zeta\vert < \varepsilon\} \geq \# \left\{ a\in \{ 0, \dots, N-1\}^{r}  : \left\Vert \frac{1}{N}a \right\Vert_{1} < c_{1} \right\},
\end{equation}
where the zeros of $ Z_{\Gamma_{N}} $ are counted with multiplicities.
The right hand side of (\ref{counting}) is equal to 
$$ \#\left\{ (a_{1},\dots, a_{r})\in \{ 0, \dots, N-1 \}^{r} : a_{1}+\cdots +a_{r} < c_{1}N \right\}, $$
which can be estimated from below by 
$$ \#\left\{ (a_{1},\dots, a_{r})\in \{ 0, \dots, N-1 \}^{r} : 0\leq a_{i}< \frac{c_{1}N}{r} \right\} \geq (c_{0}N)^{r}, $$ 
where $ c_{0}:= \min\{1, \frac{c_{1}}{r} \} $. 
By the result of Borthwick, Judge and Perry \cite{BJP}, the zero set of $ Z_{\Gamma_{N}} $ splits into ``topological zeros'' at the points $ s\in \{ 0, -1, -2, \dots\} $, and the set $ \mathcal{R}_{X_{N}} $ of resonances of $ X_{N} $, counted with multiplicities. Hence, by our choice of $ \varepsilon_{0} $ we have 
\begin{equation}
\# \{ \zeta\in \mathcal{R}_{X_{N}} : \vert \delta-\zeta\vert < \varepsilon\} \geq \left( c_{0}N\right)^{r} = c_{0}^{r}[\Gamma : \Gamma_{N}],
\end{equation}
where the resonances are counted with multiplicities. This completes the proof of the Theorem \ref{main theorem}.
\end{proof}

\bibliographystyle{plain}
\bibliography{mybib}

\end{document}